\newenvironment{claim}[1]{\par\noindent\underline{Claim:}\space#1}{}
\newenvironment{claimproof}[1]{\par\noindent\underline{Proof of Claim:}\space#1}{\hfill $\boxempty$}
\theoremstyle{plain}
 \newtheorem{thm}{Theorem}[section]
 \newtheorem{prop}{Proposition}[section]
 \newtheorem{lem}{Lemma}[section]
 \newtheorem{cor}{Corollary}[section]
\theoremstyle{definition}
 \newtheorem{defn}{Definition}[section]
\theoremstyle{remark}
 \newtheorem{rmk}{Remark}[section]
 \numberwithin{equation}{section}
\newcommand{\N}{{\mathbb N}}
\newcommand{\Q}{{\mathbb Q}}
\newcommand{\Z}{{\mathbb Z}}
\newcommand{\Gm}{\mathbb{G}_{\mr{m}}}
\newcommand{\Gml}{\mathbb{G}_{\mr{m,log}}}
\newcommand{\Gmlb}{\overline{\mathbb{G}}_{\mr{m,log}}}
\newcommand{\mr}{\mathrm}
\newcommand{\mc}{\mathcal}
\newcommand{\fsS}{(\mr{fs}/S)}
\newcommand{\fsSet}{(\mr{fs}/S)_{\mr{\acute{e}t}}}
\newcommand{\fsSfl}{(\mr{fs}/S)_{\mr{fl}}}
\newcommand{\fsSket}{(\mr{fs}/S)_{\mr{k\acute{e}t}}}
\newcommand{\fsSkfl}{(\mr{fs}/S)_{\mr{kfl}}}
\newcommand{\Set}{S_{\mr{\acute{e}t}}}
\newcommand{\Sket}{S_{\mr{k\acute{e}t}}}
\newcommand{\Sfl}{S_{\mr{fl}}}
\newcommand{\Skfl}{S_{\mr{kfl}}}
\newcommand{\Spec}{\mathop{\mr{Spec}}}
\title[Extending finite group schemes via log abelian varieties]{Extending finite subgroup schemes of semi-stable abelian varieties via log abelian varieties}
\subjclass[2010]{14K99 (primary), 14D06, 11G99 (secondary)}
\keywords{weak log abelian varieties, Kummer flat sheaves, isogenies, log finite flat group schemes}
\author[Heer Zhao]{\bfseries Heer Zhao}
\address{
    Heer Zhao, 
    Fakult\"at f\"ur Mathematik, 
    Universit\"at Duisburg-Essen, 
    Essen 45117, 
    Germany, 
    	heer.zhao@gmail.com}
\begin{document}

\vspace{18mm} \setcounter{page}{1} \thispagestyle{empty}

\begin{abstract}
For a semi-stable abelian variety $A_K$ over a complete discrete valuation field $K$, we show that every finite subgroup scheme of $A_K$ extends to a log finite flat group scheme over the valuation ring of $K$ endowed with the canonical log structure. To achieve this, we first prove that every weak log abelian variety over an fs log scheme with its underlying scheme locally noetherian, is a sheaf for the Kummer flat topology, which answers a question of Chikara Nakayama. We also give several equivalent conditions defining isogenies of log abelian varieties. 
\end{abstract}

\maketitle

\section*{Introduction}
As stated in \cite{k-k-n2}, degenerating abelian varieties can not preserve group structure, properness, and smoothness at the same time. Log abelian variety is a construction aimed to make the impossible possible in the world of log geometry. The idea dates back to Kazuya Kato's construction of log Tate curve in \cite[Sec. 2.2]{kat3}, in which he also conjectured the existence of a general theory of log abelian varieties. The theory finally comes true in \cite{k-k-n1} and \cite{k-k-n2}, and has been further developed in \cite{k-k-n3} and \cite{k-k-n4}. 

Log abelian varieties are defined as certain sheaves of abelian groups on $\fsSet$, see \cite[Def. 4.1]{k-k-n2}. Weak log abelian varieties are defined in \cite[1.6]{k-k-n4} as generalisations of log abelian varieties, and they are also sheaves of abelian groups on $\fsSet$. It is natural to expect that weak log abelian varieties are also sheaves for the classical flat topology, the Kummer \'etale topology, and the Kummer flat topology. In fact, the case for the Kummer \'etale topology has been proven, see \cite[Thm. 5.1]{k-k-n4}. Weak log abelian varieties with constant degeneration (see \cite[1.7]{k-k-n4}) are special cases of weak log abelian varieties, and they are sheaves for the Kummer flat topology by \cite[Thm. 2.1 (1)]{zha3}. In the first section, we prove that weak log abelian varieties are sheaves for the Kummer flat topology, see Theorem \ref{thm}. 

After knowing that log abelian varieties are sheaves for the Kummer flat topology, we study isogenies of log abelian varieties in the second section. We give several equivalent descriptions of isogeny, see Proposition \ref{prop2.1}. 

As mentioned in the beginning of \cite{kat3}, a finite subgroup scheme of a semi-stable abelian variety $A_K$ over a complete discrete valuation field $K$, does not necessarily extend to a finite flat group scheme over the corresponding valuation ring $R$. Again we can make the impossible possible in the world of log geometry. In the last section, we show that every finite subgroup scheme of a semi-stable abelian variety over $K$ can always be extended to a log finite flat group scheme over $S$, where $S$ denotes $\Spec R$ endowed with the canonical log structure, see Theorem \ref{thm3.2}. A key tool is an equivalence of categories between semi-stable abelian varieties over $K$ and log abelian varieties over $S$ due to Takeshi Kajiwara, Kazuya Kato and Chikara Nakayama, see \cite[13.4]{k-k-n4}.

\section*{Notation and conventions}
Let $S$ be an fs log scheme with its underlying scheme locally noetherian, we denote by $(\mr{fs}/S)$ the category of fs log schemes over $S$, and denote by $(\mr{fs}/S)_{\mr{\acute{e}t}}$ (resp. $(\mr{fs}/S)_{\mr{k\acute{e}t}}$, resp. $(\mr{fs}/S)_{\mr{fl}}$, resp. $(\mr{fs}/S)_{\mr{kfl}}$) the classical \'etale site (resp. Kummer \'etale site, resp. classical flat site, resp. Kummer flat site) on $(\mr{fs}/S)$. In order to shorten formulas, we will mostly abbreviate $\fsSet$ (resp. $\fsSket$, resp. $\fsSfl$, resp. $\fsSkfl$) as $\Set$ (resp. $\Sket$, resp. $\Sfl$, resp. $\Skfl$). We refer to \cite[2.5]{ill1} for the classical \'etale site and the Kummer \'etale site, and \cite[Def. 2.3]{kat2} and \cite[\S 2.1]{niz1} for the Kummer flat site. The definition of the classical flat site is an obvious analogue of that of the classical \'etale site. Then we have two natural ``forgetful'' map of sites:
\begin{equation}\label{eq0.1}
\varepsilon_{\mr{\acute{e}t}}:(\mr{fs}/S)_{\mr{k\acute{e}t}}\rightarrow (\mr{fs}/S)_{\mr{\acute{e}t}}
\end{equation}
and
\begin{equation}\label{eq0.2}
\varepsilon_{\mr{fl}}:(\mr{fs}/S)_{\mr{kfl}}\rightarrow (\mr{fs}/S)_{\mr{fl}} .
\end{equation}
We also have a natural map of sites
\begin{equation}\label{eq0.3}
m:(\mr{fs}/S)_{\mr{fl}}\rightarrow (\mr{fs}/S)_{\mr{\acute{e}t}} .
\end{equation}
Let 
\begin{equation}\label{eq0.4}
\delta:(\mr{fs}/S)_{\mr{kfl}}\rightarrow (\mr{fs}/S)_{\mr{\acute{e}t}} 
\end{equation}
be the composition of $m$ and $\varepsilon_{\mr{fl}}$.

Kato's multiplicative group (or the log multiplicative group) $\Gml$ is the sheaf on $\Set$ defined by $\Gml(U)=\Gamma(U,M^{\mr{gp}}_U)$
for any $U\in\fsS$, where $M_U$ denotes the log structure of $U$ and $M^{\mr{gp}}_U$ denotes the group envelope of $M_U$. The Kummer \'etale sheaf $\Gml$ is also a sheaf on $\Skfl$, see \cite[Cor. 2.22]{niz1} for a proof.

By convention, for any sheaf of abelian groups $F$ on $\Skfl$ and a subgroup sheaf $G$ of $F$ on $\Skfl$, we denote by $(F/G)_{\Set}$ the quotient sheaf on $\Set$, while $F/G$ denotes the quotient sheaf on $\Skfl$. We abbreviate the quotient sheaf $\Gml/\Gm$ on $\Skfl$ as $\Gmlb$.

\section{Weak log abelian varieties are kfl sheaves}
Let $A$ be a weak log abelian variety over $S$. The main result of this section is the following theorem.

\begin{thm}\label{thm}
The weak log abelian variety $A$ is a sheaf with respect to the Kummer flat topology.
\end{thm}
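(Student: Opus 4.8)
The plan is to reduce to the case of constant degeneration, which has already been settled by \cite[Thm. 2.1 (1)]{zha3}, and to show that being a sheaf for the Kummer flat topology is a local property for the classical \'etale topology on the base.

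First I would isolate the following locality statement: if $\{S_i\to S\}_i$ is a classical \'etale covering and the restriction $A|_{S_i}$ is a Kummer flat sheaf on $(\mr{fs}/S_i)_{\mr{kfl}}$ for every $i$, then $A$ is a Kummer flat sheaf on $\Skfl$. Here one uses crucially that $A$ is \emph{already} a sheaf on $\Set$ by definition. To prove it, fix $U\in\fsS$ and a Kummer flat covering $\{U_\lambda\to U\}_\lambda$, and set $U_i'=U\times_S S_i$ and $U_{ij}'=U\times_S S_i\times_S S_j$. Since $A$ is a classical \'etale sheaf and $\{U_i'\to U\}_i$ is a classical \'etale covering, $A(U)$ is the equalizer of $\prod_i A(U_i')\rightrightarrows\prod_{i,j}A(U_{ij}')$. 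Base-changing $\{U_\lambda\to U\}$ along $U_i'\to U$ produces a Kummer flat covering $\{U_\lambda\times_S S_i\to U_i'\}_\lambda$ inside $(\mr{fs}/S_i)_{\mr{kfl}}$ (and similarly over each $U_{ij}'$), where one should note that the relevant fibre products are unaffected by the fs condition because $S_i\to S$ is strict. Applying the hypothesis over each $S_i$ and each $S_i\times_S S_j$ computes $A(U_i')$ and $A(U_{ij}')$ as the corresponding equalizers, and a diagram chase over the bi-indexed system $\{U_\lambda\times_S S_i\}_{i,\lambda}$ then yields exactness of $A(U)\to\prod_\lambda A(U_\lambda)\rightrightarrows\prod_{\lambda,\mu}A(U_\lambda\times_U U_\mu)$, which is the desired sheaf condition.

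With this locality in hand the theorem follows formally. By the definition of weak log abelian variety (\cite[1.6]{k-k-n4}), there is a classical \'etale covering $\{S_i\to S\}$ over which the relevant lattices $X$ and $Y$ become constant, so that $A|_{S_i}$ is a weak log abelian variety with constant degeneration in the sense of \cite[1.7]{k-k-n4}. By \cite[Thm. 2.1 (1)]{zha3} each $A|_{S_i}$ is therefore a sheaf for the Kummer flat topology on $(\mr{fs}/S_i)_{\mr{kfl}}$, and feeding this into the locality statement above shows that $A$ is a Kummer flat sheaf.

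The main obstacle is the locality statement itself. The delicate point is that the sheaf property is asserted for the finer Kummer flat topology, while the covering $\{S_i\to S\}$ that trivialises the degeneration data lives in the coarser classical \'etale topology; the argument only closes because $A$ is known in advance to be a sheaf on $\Set$, which supplies the outer equalizer along which the per-$S_i$ Kummer flat descents are glued. One must also take care that all fibre products are formed in the category of fs log schemes, but since $S_i\to S$ is strict these agree with the naive fibre products and the Kummer flat coverings base-change correctly, so everything beyond this reduction is a formal consequence of the constant-degeneration case.
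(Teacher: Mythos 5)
Your locality lemma is fine: since $A$ is by definition a sheaf on $\Set$, and the strict \'etale morphisms $S_i\to S$ neither disturb fs fibre products nor the property of being a Kummer flat covering, the two-topology \v{C}ech argument you sketch does show that the Kummer flat sheaf condition can be checked after a classical \'etale cover of the base. Indeed, the paper implicitly performs the same reduction when it assumes that the admissible pairing $\langle,\rangle$ exists globally. The gap is in the second step: it is not true that a weak log abelian variety becomes one \emph{with constant degeneration} after an \'etale cover trivialising $X$ and $Y$. Constant degeneration in the sense of \cite[1.7]{k-k-n4} is not the condition that the lattices are constant sheaves; it requires in addition that the semi-abelian part $G$ be globally an extension of an abelian scheme by a torus (constant toric rank) and that the whole structure arise from a log $1$-motif over $S$, which forces the subsheaf $\overline{Y}\subset\mc{Q}_{\mr{\acute{e}t}}$ to be locally constant. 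For a genuinely degenerating family --- already for the log Tate curve over a standard log trait --- the toric rank of $G$ and the stalks of $\overline{Y}$ jump along a stratification of $S$, and no \'etale cover removes this jumping. Hence \cite[Thm. 2.1 (1)]{zha3} does not apply to $A|_{S_i}$, and the reduction collapses.

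This jumping is exactly what the paper's proof is organised to handle: it uses the globalised presentation $0\to\overline{Y}\to\tilde{A}\to A\to 0$, proves that $\tilde{A}$ and the merely constructible sheaf $\overline{Y}$ are Kummer flat sheaves (the latter by induction along a stratification of $S$ on which $\overline{Y}$ is locally constant, together with an openness result for Kummer flat covers), and then shows $R^1\delta_*\overline{Y}=0$ in order to descend the sheaf property from $\tilde{A}$ to the quotient $A$. The constant-degeneration case enters the paper's argument only at a point where the assertion being verified (surjectivity of a map of \'etale sheaves) is stalkwise, so that one may legitimately pass to a log point; the full sheaf condition is not a stalkwise condition, and your argument offers no substitute for the d\'evissage through $\overline{Y}$.
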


Firstly by the definition in \cite[1.6]{k-k-n4}, \'etale locally on $S$, there is a semi-abelian scheme $G$ over $S$, finitely generated free $\Z$-modules $X$ and $Y$, an admissible pairing \begin{equation}\label{eq1.1}
\langle,\rangle:X\times Y\rightarrow (\Gml/\Gm)_{\Set}
\end{equation}
on $\Set=\fsSet$, and an exact sequence 
\begin{equation}\label{eq1.2}
0\rightarrow G\rightarrow A\rightarrow \mc{Q}_{\mr{\acute{e}t}}/\overline{Y}\rightarrow 0 
\end{equation}
on $S_{\mr{\acute{e}t}}$ with $\mc{Q}_{\mr{\acute{e}t}}:=\mc{H}om_{\Set}(X,(\Gml/\Gm)_{\Set})^{(Y)}$ and $\overline{Y}$ the image of $Y$ inside $\mc{Q}_{\mr{\acute{e}t}}$. As explained in \cite[9.2, 9.3]{k-k-n2} and \cite[1.7]{k-k-n4}, $G$ and the pairing $\overline{X}\times\overline{Y}\rightarrow (\Gml/\Gm)_{\mr{et}}$ induced from (\ref{eq1.1}) exist globally. We call $G$ the semi-abelian part of $A$, and $\mc{Q}_{\mr{\acute{e}t}}/\overline{Y}$ the discrete part of $A$.

For the proof of Theorem \ref{thm}, we may assume that we are in the situation in \cite[2.4]{k-k-n4}, i.e. the pairing $\langle,\rangle$ exists globally. Let $\tilde{A}$ be as defined in \cite[2.4]{k-k-n4}, then we have a commutative diagram  
\begin{equation}\label{eq1.3}
\xymatrix{
&&0\ar[d] &0\ar[d]   \\
&&\overline{Y}\ar@{=}[r]\ar[d] &\overline{Y}\ar[d]  \\
0\ar[r] &G\ar[r]\ar@{=}[d] &\tilde{A}\ar[r]\ar[d] &\mc{Q}_{\mr{\acute{e}t}}\ar[r]\ar[d] &0  \\
0\ar[r] &G\ar[r] &A\ar[r]\ar[d] &\mc{Q}_{\mr{\acute{e}t}}/\overline{Y}\ar[r]\ar[d] &0  \\
&&0 &0
}
\end{equation}
on $S_{\mr{\acute{e}t}}$ with exact rows and columns. 

\begin{lem}\label{lem1}
We have that $\tilde{A}$ is a sheaf for the Kummer flat topology.
\end{lem}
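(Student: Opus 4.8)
The plan is to deduce the Kummer flat sheaf property of $\tilde{A}$ from the middle row of (\ref{eq1.3}),
\[
0 \to G \to \tilde{A} \to \mc{Q}_{\mr{\acute{e}t}} \to 0,
\]
by rebuilding $\tilde{A}$ out of sheaves that are already known to be Kummer flat sheaves. Two such building blocks are available: the semi-abelian scheme $G$, which is a kfl sheaf because it is a smooth commutative group scheme (classically flat, and a kfl sheaf by log flat descent); and Kato's group $\Gml=M^{\mr{gp}}_S$, which is a kfl sheaf by \cite[Cor. 2.22]{niz1} as recalled above. Since $X$ is a finitely generated free $\Z$-module, étale locally constant, the sheaves $T:=\mc{H}om_{\Set}(X,\Gm)$ and $\Tlog:=\mc{H}om_{\Set}(X,\Gml)$ are étale locally finite products of copies of $\Gm$ and of $\Gml$, hence kfl sheaves, the former being the torus part of $G$.

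First I would realise $\tilde{A}$ inside a pushout. The inclusion $\Gm\hookrightarrow\Gml$ induces $T\hookrightarrow\Tlog$, and since $T$ is the torus part of $G$ we may form the pushout $G_{\mr{log}}:=G\oplus_T\Tlog$, sitting in an exact sequence $0\to G\to G_{\mr{log}}\to\Tlog/T\to 0$ with $\Tlog/T=\mc{H}om_{\Set}(X,(\Gml/\Gm)_{\Set})$. Taking the $(Y)$-part of the quotient recovers exactly the middle row of (\ref{eq1.3}), so that $\tilde{A}$ is the preimage of $\mc{Q}_{\mr{\acute{e}t}}\subset\mc{H}om_{\Set}(X,(\Gml/\Gm)_{\Set})$ under $G_{\mr{log}}\twoheadrightarrow\Tlog/T$. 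I would then form the same pushout in the category of kfl sheaves, obtaining a kfl sheaf $\mc{G}$ fitting in $0\to G\to\mc{G}\to\mc{H}om_{\Skfl}(X,\Gmlb)\to 0$ on $\Skfl$; its $(Y)$-part $\mc{A}$ is a kfl sheaf, being the subsheaf of the kfl sheaf $\mc{G}$ cut out by the $(Y)$-condition, which is local on $\Skfl$ (indeed already on $\Sket$) and stable under the relevant covers. It then remains to identify the étale sheaf $\tilde{A}$ of (\ref{eq1.3}) with $\delta_*\mc{A}$ and to check that the unit $\tilde{A}\to\delta_*\delta^*\tilde{A}$ is an isomorphism; for this I would apply $\delta_*$ to the kfl exact sequence, compare with the étale sequence above by the snake lemma, and use that the units for $G$ and for $\Tlog$ are isomorphisms.

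The main obstacle is exactly that $\mc{Q}_{\mr{\acute{e}t}}$ itself is \emph{not} a kfl sheaf: it is built from $(\Gml/\Gm)_{\Set}$, whose sections grow by a factor $\tfrac1n$ along a Kummer cover of index $n$ on which the associated Galois action is trivial, so the naive attempt to present $\tilde{A}$ as an extension of two kfl sheaves fails. The resolution is that the discrepancy between $(\Gml/\Gm)_{\Set}$ and the kfl quotient $\Gmlb$ is governed by $R^1\delta_*\Gm$, and that this discrepancy is absorbed by the extension class of $G_{\mr{log}}$ over $\Tlog/T$ coming from the torus $T\subset G$. Concretely, the point to verify is that forming the $(Y)$-part commutes with $\delta_*$ and that the connecting term $R^1\delta_*G$ does not interfere on the $(Y)$-part; once this compatibility is in place, the kfl sheaf property of $\Gml$, hence of $\Tlog$, is precisely what compensates for the non-sheafiness of $(\Gml/\Gm)_{\Set}$ and forces $\tilde{A}=\delta_*\mc{A}$ to be a Kummer flat sheaf.
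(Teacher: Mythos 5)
There is a genuine gap, and it sits exactly where you flag ``the point to verify.'' Your reduction of the problem is essentially correct: writing $\tilde{A}$ as the preimage of $\mc{Q}_{\mr{\acute{e}t}}$ in $G_{\mr{log}}$, one checks (as you indicate) that $G_{\mr{log}}$ is a kfl sheaf and that $\delta_*\mc{A}$ is the preimage in $G_{\mr{log}}$ of $(\mc{Q}_{\mr{\acute{e}t}}\otimes_{\Z}\Q)\cap\mc{H}om_{\Set}(X,(\Gml/\Gm)_{\Set})$, so the whole lemma comes down to the single assertion that
$(\mc{Q}_{\mr{\acute{e}t}}\otimes_{\Z}\Q)\cap\mc{H}om_{\Set}(X,(\Gml/\Gm)_{\Set})=\mc{Q}_{\mr{\acute{e}t}}$,
i.e.\ that forming the $(Y)$-part commutes with $\delta_*$. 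But this saturation statement is precisely the surjectivity of the unit $\tilde{A}\rightarrow\delta_*\delta^{-1}\tilde{A}$, which is the hard half of the lemma; asserting that the kfl sheaf property of $\Gml$ ``compensates for'' the non-sheafiness of $(\Gml/\Gm)_{\Set}$ restates the conclusion rather than proving it. The paper establishes exactly this point by a geometric input you do not supply: surjectivity is checked on stalks, which reduces to a log point where $A$ has constant reduction, and there the $\Delta$-parts $\tilde{A}^{(\Delta)}$ are representable by fs log schemes (\cite[Thm.~4.1]{k-k-n4}), hence kfl sheaves by Kato's log flat descent (\cite[Thm.~5.2]{k-k-n4}); since $\tilde{A}$ is the union of the $\tilde{A}^{(\Delta)}$, the surjectivity follows. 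Some input of this kind (or a direct combinatorial proof of the saturation of the $(Y)$-condition using admissibility) is indispensable.

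A secondary problem is the starting point itself: the pushout $G_{\mr{log}}=G\oplus_T\Tlog$ presupposes that $\mc{H}om_{\Set}(X,\Gm)$ is a subtorus of $G$, which holds only for weak log abelian varieties with constant degeneration. For a general weak log abelian variety the toric rank of the semi-abelian part $G$ jumps along $S$, so there is no global (not even \'etale-local) torus part $\mc{H}om_{\Set}(X,\Gm)\subset G$, and $\tilde{A}$ must instead be taken as the fibre product $A\times_{\mc{Q}_{\mr{\acute{e}t}}/\overline{Y}}\mc{Q}_{\mr{\acute{e}t}}$ as in \cite[2.4]{k-k-n4}. Your argument would therefore at best treat the constant degeneration case, which is already covered by \cite[Thm.~2.1~(1)]{zha3}. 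The injectivity of the unit map, on the other hand, you could obtain as the paper does, from the diagram with $G\rightarrow\delta_*\delta^{-1}G$ an isomorphism and $\mc{Q}_{\mr{\acute{e}t}}\rightarrow\mc{Q}_{\mr{\acute{e}t}}\otimes_{\Z}\Q$ injective; that part of your outline is sound.
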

\begin{proof}
The argument is almost identical to the part of \cite[5.3]{k-k-n4}, which shows that $\tilde{A}$ is a sheaf for the Kummer \'etale topology.

To show $\tilde{A}$ is a sheaf for the Kummer flat topology, it amounts to showing that the canonical homomorphism $\tilde{A}\rightarrow \delta_*\delta^{-1}\tilde{A}$ coming from the adjunction $(\delta^{-1},\delta_*)$ is an isomorphism.

Applying the adjunction $(\delta^{-1},\delta_*)$ to the short exact sequence
$$0\rightarrow G\rightarrow\tilde{A}\rightarrow\mc{Q}_{\mr{\acute{e}t}}\rightarrow 0,$$
we get the following commutative diagram
$$
\xymatrix{
0\ar[r] &G\ar[r]\ar[d]^{\alpha} &\tilde{A}\ar[r]\ar[d]^{\beta} &\mc{Q}_{\mr{\acute{e}t}}\ar[r]\ar[d]^{\gamma} &0 \\
0\ar[r] &\delta_*\delta^{-1}G\ar[r] &\delta_*\delta^{-1}\tilde{A}\ar[r]^{\theta} &\delta_*\delta^{-1}\mc{Q}_{\mr{\acute{e}t}}
}$$
with exact rows. Since $G$ is a group scheme, $\alpha$ is an isomorphism by \cite[Thm. 5.2]{k-k-n4}. By \cite[Lem. 2.3]{zha3}, $\gamma$ is identical to the canonical injection $\mc{Q}_{\mr{\acute{e}t}}\rightarrow \mc{Q}_{\mr{\acute{e}t}}\otimes_{\Z}\Q$. It follows that $\beta$ is injective. We are reduced to show the surjectivity of $\beta$. It suffices to show that the image of $\theta$ lands in the image of $\gamma$. Since the surjectivity of a morphism of \'etale sheaves can be checked stalkwise, and the formations of $\mc{Q}_{\mr{\acute{e}t}}$ and $\mc{Q}_{\mr{\acute{e}t}}\otimes_{\Z}\Q$ are compatible with strict base change, we may assume that the base is a log point. Then we are reduced to show the surjectivity of $\beta$ for the case that $A$ has constant reduction. Moreover we may assume that we are in the situation of \cite[2.5]{k-k-n4}. Then by \cite[Thm. 4.1]{k-k-n4}, the $\Delta$-part $\tilde{A}^{(\Delta)}$ of $\tilde{A}$ is represented by an fs log scheme for any finitely generated subcone $\Delta$ of $C$ (for the notation $C$ and $\tilde{A}^{(\Delta)}$, see \cite[Thm. 4.1]{k-k-n4}). Hence $\tilde{A}^{(\Delta)}$ is a sheaf for the Kummer flat topology by \cite[Thm. 5.2]{k-k-n4}. Since $\tilde{A}$ is the union of $\tilde{A}^{(\Delta)}$ with $\Delta$ running over the set of all finitely generated subcones of $C$, it is a sheaf for the Kummer flat topology. In particular $\beta:\tilde{A}\rightarrow \delta_*\delta^{-1}\tilde{A}$ is surjective.
\end{proof}

Since $\tilde{A}$ is a sheaf with respect to the Kummer flat topology, we can identify $\delta_*\delta^{-1}\tilde{A}$ with $\tilde{A}$ canonically. 

\begin{lem}\label{lem2}
The sheaf  $\overline{Y}$ on $S_{\mr{\acute{e}t}}$ is also a sheaf for the Kummer flat topology.
\end{lem}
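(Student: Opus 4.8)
The plan is to reduce the assertion to the fact that a constant group scheme is a sheaf for the Kummer flat topology, which is exactly the input \cite[Thm. 5.2]{k-k-n4} that already made $\alpha$ an isomorphism in the proof of Lemma \ref{lem1}.

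First I would identify $\overline{Y}$ more explicitly. Since the pairing $\langle,\rangle$ is admissible, hence nondegenerate, the induced homomorphism $Y\to\mc{Q}_{\mr{\acute{e}t}}$ is injective, so the canonical surjection $Y\to\overline{Y}$ is an isomorphism and $\overline{Y}\cong Y$ as sheaves on $\Set$. In particular $\overline{Y}$ is, \'etale locally on $S$, the constant sheaf associated to a finitely generated free $\Z$-module $\Z^{r}$.

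Next I would carry out the reduction to the constant case. Being a sheaf for the Kummer flat topology is equivalent to the canonical map $\overline{Y}\to\delta_*\delta^{-1}\overline{Y}$ being an isomorphism, and this can be tested \'etale locally on $S$, exactly as in the proof of Lemma \ref{lem1}: the relevant formations are compatible with strict base change, and whether a map of \'etale sheaves is an isomorphism can be checked stalkwise. Hence I may assume $\overline{Y}=\Z^{r}$ is constant, and since a finite direct product of Kummer flat sheaves is again a Kummer flat sheaf, it suffices to treat the case $\overline{Y}=\Z$. Now the constant sheaf $\Z$ is represented over $S$ by the constant group scheme $\Z_S=\coprod_{n\in\Z}S$ endowed with the log structure pulled back from $S$; indeed, because $\Z_S\to S$ is strict, an $S$-morphism from any $U\in\fsS$ to $\Z_S$ is the same datum as a locally constant $\Z$-valued function on the underlying scheme of $U$. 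As $\Z_S$ is a commutative group scheme over $S$, \cite[Thm. 5.2]{k-k-n4} shows that the functor it represents is a sheaf for the Kummer flat topology, which is what we want.

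The main obstacle I anticipate is this last step, because $\Z_S$ is not of finite type over $S$, so one must make sure that \cite[Thm. 5.2]{k-k-n4} is available in this generality. If the cited theorem is stated only for group schemes of finite type, the remedy is to write $\Z$ as the filtered colimit of the finite \'etale $S$-schemes $\coprod_{|n|\le N}S$, each of which is a Kummer flat sheaf, and to use that over a quasi-compact base every Kummer flat cover may be refined by a quasi-compact one, so that the sheaf axiom commutes with this filtered colimit.
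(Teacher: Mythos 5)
There is a genuine gap at the very first step: the identification $\overline{Y}\cong Y$ is false in general, so the reduction to a locally constant sheaf does not work. The point is that $\overline{Y}$ is the image of $Y$ in $\mc{Q}_{\mr{\acute{e}t}}=\mc{H}om_{\Set}(X,(\Gml/\Gm)_{\Set})^{(Y)}$, and the target $(\Gml/\Gm)_{\Set}$ has stalks that vary over $S$: at a point where the log structure is trivial the stalk is $0$, so the stalk of $\mc{Q}_{\mr{\acute{e}t}}$, and hence of $\overline{Y}$, vanishes there, while at a point with nontrivial log structure and nondegenerate induced pairing the stalk of $\overline{Y}$ is all of $Y_{\bar{s}}$. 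Already for the log Tate curve over a discrete valuation ring with its canonical log structure one has $\overline{Y}=0$ over the generic point and $\overline{Y}\cong\Z$ over the closed point. So $\overline{Y}$ is only a \emph{constructible} sheaf on $S_{\mr{\acute{e}t}}$, not locally constant, and no étale localization will make it constant. Admissibility (or even nondegeneracy) of the pairing does not give injectivity of $Y\to\mc{Q}_{\mr{\acute{e}t}}$ as a map of sheaves; it only controls what happens stalkwise where the log structure is nontrivial.

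Your argument for the locally constant case is essentially the paper's base case: pass to a strict étale cover where the sheaf becomes constant, observe that a constant sheaf is representable by a (strict) group scheme over $S$, and invoke \cite[Thm.~5.2]{k-k-n4}; your concern about finite type is not an issue the paper worries about either. But the actual content of the lemma lies in handling the failure of local constancy. The paper does this by choosing a stratification $\emptyset=S_0\subsetneq\cdots\subsetneq S_n=S$ by closed subschemes such that $\overline{Y}$ is locally constant on each $U_i=S_i-S_{i-1}$, and inducting on $n$ using the exact sequence $0\to j_{\mr{\acute{e}t}!}j_{\mr{\acute{e}t}}^{-1}\overline{Y}\to\overline{Y}\to i_{\mr{\acute{e}t}*}i_{\mr{\acute{e}t}}^{-1}\overline{Y}\to 0$. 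The delicate step, entirely absent from your proposal, is showing that $\delta_*\delta^{-1}(j_{\mr{\acute{e}t}!}j_{\mr{\acute{e}t}}^{-1}\overline{Y})$ is still supported over the open stratum $U_n$, which requires knowing that Kummer flat covers are open maps (Lemma~\ref{lemA.1} of the appendix). To repair your proof you would need to supply this stratification-and-extension-by-zero argument, or some substitute for it.
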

\begin{proof}
It suffices to show that the canonical map $\overline{Y}\rightarrow\delta_*\delta^{-1}\overline{Y}$ is an isomorphism. This is a local problem, and we may assume that the underlying scheme of $S$ is noetherian. Since $\overline{Y}$ is a constructible sheaf on $S_{\mr{\acute{e}t}}$, there exists a shortest chain of closed subschemes 
\begin{equation}\label{eq.const}
\emptyset=S_0\subsetneq S_1 \subsetneq \cdots \subsetneq S_n=S
\end{equation}
such that the restriction of $\overline{Y}$ to $U_i:=S_i-S_{i-1}$ is locally constant for $i=1,\cdots, n$. 

We use induction on $n$ now. 

If $n=1$, then $\overline{Y}$ is locally constant. Let $f:S'\rightarrow S$ be a strict \'etale cover, i.e. $f$ is an \'etale cover of schemes and the log structure of $S'$ is induced from that of $S$, such that the restriction of $\overline{Y}$ to $S'$ is constant. And let $f_{\mr{\acute{e}t}}:(\mr{fs}/S')_{\mr{\acute{e}t}}\rightarrow (\mr{fs}/S)_{\mr{\acute{e}t}}$ be the map of sites induced by $f$. To show that the canonical map $\overline{Y}\rightarrow\delta_*\delta^{-1}\overline{Y}$ is an isomorphism, it suffices to show that its restriction $f_{\mr{\acute{e}t}}^{-1}\overline{Y}\rightarrow f_{\mr{\acute{e}t}}^{-1}\delta_*\delta^{-1}\overline{Y}$ to $(\mr{fs}/S')_{\mr{\acute{e}t}}$ is an isomorphism. By an easy computation, the map $f_{\mr{\acute{e}t}}^{-1}\overline{Y}\rightarrow f_{\mr{\acute{e}t}}^{-1}\delta_*\delta^{-1}\overline{Y}$ can be identified with the canonical map $f_{\mr{\acute{e}t}}^{-1}\overline{Y}\rightarrow\delta_*\delta^{-1}f_{\mr{\acute{e}t}}^{-1}\overline{Y}$. The constant sheaf $f_{\mr{\acute{e}t}}^{-1}\overline{Y}$ is representable by a group scheme, hence $f_{\mr{\acute{e}t}}^{-1}\overline{Y}\rightarrow\delta_*\delta^{-1}f_{\mr{\acute{e}t}}^{-1}\overline{Y}$ is an isomorphism by \cite[Thm. 5.2]{k-k-n4}.

Now we assume $n>1$. Let $j_{\mr{\acute{e}t}}$ (resp. $j_{\mr{kfl}}$) be the canonical map $(\mr{fs}/U_n)_{\mr{\acute{e}t}}\rightarrow (\mr{fs}/S_{n})_{\mr{\acute{e}t}}$ (resp. $(\mr{fs}/U_n)_{\mr{kfl}}\rightarrow (\mr{fs}/S_{n})_{\mr{kfl}}$) induced by the open immersion $U_n\rightarrow S_n$, and let $i_{\mr{\acute{e}t}}$ (resp. $i_{\mr{kfl}}$) be the canonical map $(\mr{fs}/S_{n-1})_{\mr{\acute{e}t}}\rightarrow (\mr{fs}/S_{n})_{\mr{\acute{e}t}}$ (resp. $(\mr{fs}/S_{n-1})_{\mr{kfl}}\rightarrow (\mr{fs}/S_{n})_{\mr{kfl}}$) induced by the closed immersion $S_{n-1}\rightarrow S_n$. By induction the \'etale sheaf $i_{\mr{\acute{e}t}}^{-1}\overline{Y}$ is also a sheaf on $(\mr{fs}/S_{n-1})_{\mr{kfl}}$. We have a short exact sequence
$$0\rightarrow j_{\mr{\acute{e}t}!}j_{\mr{\acute{e}t}}^{-1}\overline{Y}\rightarrow \overline{Y}\rightarrow i_{\mr{\acute{e}t}*}i_{\mr{\acute{e}t}}^{-1}\overline{Y}\rightarrow 0.$$
For the existence of the left adjoint functor $j_{\mr{\acute{e}t}!}$ (resp. $j_{\mr{kfl}!}$) of $j_{\mr{\acute{e}t}}^{-1}$ (resp. $j_{\mr{kfl}}^{-1}$), we refer to \cite[\href{http://stacks.math.columbia.edu/tag/00XZ}{Tag 00XZ}]{stacks-project}.
Applying the adjoint functors $\delta^{-1}$ and $\delta_*$ to the above short exact sequence, we get the following commutative diagram 
\begin{equation}\label{eq1.4}
\xymatrix{
0\ar[r] &j_{\mr{\acute{e}t}!}j_{\mr{\acute{e}t}}^{-1}\overline{Y}\ar[r]\ar[d]_{\alpha} &\overline{Y}\ar[r]\ar[d] &i_{\mr{\acute{e}t}*}i_{\mr{\acute{e}t}}^{-1}\overline{Y}\ar[r]\ar[d] &0 \\
0\ar[r] &\delta_*\delta^{-1}j_{\mr{\acute{e}t}!}j_{\mr{\acute{e}t}}^{-1}\overline{Y}\ar[r] &\delta_*\delta^{-1}\overline{Y}\ar[r] &\delta_*\delta^{-1}i_{\mr{\acute{e}t}*}i_{\mr{\acute{e}t}}^{-1}\overline{Y}
}
\end{equation} with exact rows, where the vertical maps are given by the adjunction $(\delta^{-1},\delta_*)$. To show that the vertical map in the middle is an isomorphism, we are reduced to show that the left and the right vertical maps are both isomorphisms. 

\begin{claim}
Both the domain and the target of $\alpha$ are supported over $U_n$.
\end{claim}
\begin{claimproof}
The domain of $\alpha$ is clearly supported over $U_n$. We show that the target of $\alpha$ is also supported over $U_n$. For simplification of notation, we denote $j_{\mr{\acute{e}t}!}j_{\mr{\acute{e}t}}^{-1}\overline{Y}$ by $F$. It suffices to show that for each $U\in(\mr{fs}/S)$, the restriction of $\delta_*\delta^{-1}F$ to the small \'etale site of $U$ has trivial stalk at any $u\in U$ which does not lie over $U_n$. Let $x$ be an element of the stalk at $u$, and suppose that $x$ is represented by a section $s\in \delta^{-1}F(V)$ for some \'etale neighborhood $V\rightarrow U,v\mapsto u$ of $u$. By \cite[\href{http://stacks.math.columbia.edu/tag/00WK}{Tag 00WK}]{stacks-project}, there exists a Kummer flat cover $p:V'\rightarrow V$, such that $s$ is represented by a section $s'$ of $F(V')$ which satisfies that $p_1^*(s')$ and $p_2^*(s')$ agree over some Kummer flat cover $V''$ of $V'\times_VV'$, where $p_1$ (resp. $p_2$) is the projection from $V'\times_VV'$ to its first (resp. second) factor. We are going to shrink $V$ and $V'$ so that $p$ remains a Kummer flat cover and $s'$ becomes zero after shrinking.

Let $v'\in V'$ be a preimage of $v\in V$. Since $V'$ is clearly not over $U_n$, the section $s'$ becomes zero after restricting to some open neighborhood $W'$ of $v'$ in $V'$. Since the underlying map of schemes of the Kummer flat cover $V'\rightarrow V$ is open by Lemma \ref{lemA.1}, $W:=p(W')$ is an open subscheme of $V$. We regard $W$ (resp. $W'$) as a log scheme with the induced log structure from $V$ (resp. $V'$). Since log flat morphisms and Kummer morphisms are stable under compositions, the morphism $p\mid_{W'}:W'\rightarrow W$ is both log flat and Kummer. By \cite[\href{http://stacks.math.columbia.edu/tag/01TQ}{Tag 01TQ}]{stacks-project}, the morphism $p\mid_{W'}:W'\rightarrow W$ is locally of finite presentation. It follows that $p\mid_{W'}:W'\rightarrow W$ is a Kummer flat cover. Note that $W$ is also an \'etale neighborhood of $u\in U$. It follows that $x$ can also be represented by $s'\mid_{W'}=0$, hence $x=0$. This finishes the proof of the claim.  
\end{claimproof}

Now we get back to the proof of Lemma \ref{lem2}. By the above claim, to show that $\alpha$ is an isomorphism,  it suffices to check that the restriction of $\alpha$ to $U_n$ is an isomorphism. The map
$$j_{\mr{\acute{e}t}}^{-1}(\alpha):j_{\mr{\acute{e}t}}^{-1}(j_{\mr{\acute{e}t}!}j_{\mr{\acute{e}t}}^{-1}\overline{Y})\rightarrow j_{\mr{\acute{e}t}}^{-1}(\delta_*\delta^{-1}j_{\mr{\acute{e}t}!}j_{\mr{\acute{e}t}}^{-1}\overline{Y})$$
can be identified with the canonical map 
$$j_{\mr{\acute{e}t}}^{-1}(j_{\mr{\acute{e}t}!}j_{\mr{\acute{e}t}}^{-1}\overline{Y})\rightarrow \delta_*\delta^{-1}j_{\mr{\acute{e}t}}^{-1}(j_{\mr{\acute{e}t}!}j_{\mr{\acute{e}t}}^{-1}\overline{Y}).$$
By \cite[\href{http://stacks.math.columbia.edu/tag/00Y2}{Tag 00Y2}]{stacks-project}, the sheaf $j_{\mr{\acute{e}t}}^{-1}(j_{\mr{\acute{e}t}!}j_{\mr{\acute{e}t}}^{-1}\overline{Y})=(j_{\mr{\acute{e}t}}^{-1}j_{\mr{\acute{e}t}!})j_{\mr{\acute{e}t}}^{-1}\overline{Y}$ is canonically identified with the sheaf $j_{\mr{\acute{e}t}}^{-1}\overline{Y}$. Hence the map $j_{\mr{\acute{e}t}}^{-1}(\alpha)$ can be further identified with the canonical map $j_{\mr{\acute{e}t}}^{-1}\overline{Y}\rightarrow \delta_*\delta^{-1}j_{\mr{\acute{e}t}}^{-1}\overline{Y}$. The later is an isomorphism by the case $n=1$. It follows that the left vertical map of (\ref{eq1.4}) is an isomorphism. 

Since $i_{\mr{\acute{e}t}}^{-1}\overline{Y}$ is a sheaf for the Kummer flat topology, so is the \'etale sheaf $i_{\mr{\acute{e}t}*}i_{\mr{\acute{e}t}}^{-1}\overline{Y}$. Hence the right vertical map of (\ref{eq1.4}) is an isomorphism.
\end{proof}

\begin{cor}\label{cor}
Let the notation be as in the proof of Lemma \ref{lem2}, then we have a canonical short exact sequence
$$0\rightarrow j_{\mr{kfl}!}j_{\mr{kfl}}^{-1}\overline{Y}\rightarrow \overline{Y}\rightarrow i_{\mr{kfl}*}i_{\mr{kfl}}^{-1}\overline{Y}\rightarrow 0$$
of sheaves of abelian groups on $S_{\mr{kfl}}$.
\end{cor}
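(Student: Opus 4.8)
The plan is to obtain the asserted Kummer flat short exact sequence by applying the inverse image functor $\delta^{-1}$ to the étale short exact sequence
$$0\rightarrow j_{\mr{\acute{e}t}!}j_{\mr{\acute{e}t}}^{-1}\overline{Y}\rightarrow \overline{Y}\rightarrow i_{\mr{\acute{e}t}*}i_{\mr{\acute{e}t}}^{-1}\overline{Y}\rightarrow 0$$
already used in the proof of Lemma \ref{lem2}, and then reinterpreting each term on $S_{\mr{kfl}}$. First I would record that $\delta^{-1}$ is exact, being the inverse image part of a morphism of sites, so the pulled-back sequence is again short exact. Crucially, the proof of Lemma \ref{lem2} shows that all three étale sheaves appearing are sheaves for the Kummer flat topology: for $j_{\mr{\acute{e}t}!}j_{\mr{\acute{e}t}}^{-1}\overline{Y}$ this is the isomorphy of $\alpha$, for $\overline{Y}$ it is the assertion of the lemma itself, and for $i_{\mr{\acute{e}t}*}i_{\mr{\acute{e}t}}^{-1}\overline{Y}$ it is the isomorphy of the right vertical map of (\ref{eq1.4}). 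Hence for each such étale sheaf $F$ the adjunction unit $F\to\delta_*\delta^{-1}F$ is an isomorphism, so $\delta^{-1}F$ has the same sections as $F$; in particular the middle term $\delta^{-1}\overline{Y}$ is canonically the Kummer flat sheaf denoted again by $\overline{Y}$.

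It then remains to identify the two outer terms with the Kummer flat functors. For the left term I would prove the base change identity $\delta^{-1}j_{\mr{\acute{e}t}!}\cong j_{\mr{kfl}!}\delta^{-1}$ by passing to right adjoints: the right adjoint of $\delta_S^{-1}j_{\mr{\acute{e}t}!}$ is $j_{\mr{\acute{e}t}}^{-1}\delta_{S*}$, that of $j_{\mr{kfl}!}\delta_{U_n}^{-1}$ is $\delta_{U_n*}j_{\mr{kfl}}^{-1}$, and both send a Kummer flat sheaf $G$ on $S$ to its restriction to $(\mr{fs}/U_n)$, since $\delta_*$ is evaluation and $j^{-1}$ is restriction along the localization. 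Combined with the trivial identity $\delta_{U_n}^{-1}j_{\mr{\acute{e}t}}^{-1}=j_{\mr{kfl}}^{-1}\delta_S^{-1}$ for inverse images, this yields $\delta^{-1}(j_{\mr{\acute{e}t}!}j_{\mr{\acute{e}t}}^{-1}\overline{Y})\cong j_{\mr{kfl}!}j_{\mr{kfl}}^{-1}\overline{Y}$.

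For the right term I would instead argue directly on sections, using the big-site formula for pushforward along the closed immersion: for $V\in(\mr{fs}/S)$ one has $(i_{\tau*}i_\tau^{-1}\overline{Y})(V)=(i_\tau^{-1}\overline{Y})(V\times_S S_{n-1})$ for both $\tau=\mr{\acute{e}t}$ and $\tau=\mr{kfl}$. Since $i_{\mr{\acute{e}t}}^{-1}\overline{Y}$ is already a Kummer flat sheaf by the induction hypothesis of Lemma \ref{lem2}, it coincides with $i_{\mr{kfl}}^{-1}\overline{Y}$, so both formulas evaluate to the same group $(i_{\mr{\acute{e}t}}^{-1}\overline{Y})(V\times_S S_{n-1})$ functorially in $V$; together with the fact that $i_{\mr{\acute{e}t}*}i_{\mr{\acute{e}t}}^{-1}\overline{Y}$ is a Kummer flat sheaf, so that $\delta^{-1}$ preserves its sections, this gives $\delta^{-1}(i_{\mr{\acute{e}t}*}i_{\mr{\acute{e}t}}^{-1}\overline{Y})\cong i_{\mr{kfl}*}i_{\mr{kfl}}^{-1}\overline{Y}$. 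Assembling the three identifications produces the desired Kummer flat short exact sequence. I expect the point requiring the most care to be the compatibility of $\delta^{-1}$ with the closed pushforward $i_*$: this is where one must invoke both that $i_{\mr{\acute{e}t}}^{-1}\overline{Y}$ is a kfl sheaf (the induction hypothesis) and that $i_*$ on these big sites is computed by the fs base change $V\mapsto V\times_S S_{n-1}$; the $j_!$ compatibility, by contrast, is purely formal.
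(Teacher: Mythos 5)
Your proof is correct and follows essentially the same route as the paper: the paper's own proof simply observes that the \'etale sequence is already a short exact sequence of sheaves on $S_{\mr{kfl}}$ (because the proof of Lemma \ref{lem2} shows all three terms are kfl sheaves) and then asserts the identifications $j_{\mr{\acute{e}t}!}j_{\mr{\acute{e}t}}^{-1}\overline{Y}=j_{\mr{kfl}!}j_{\mr{kfl}}^{-1}\overline{Y}$ and $i_{\mr{\acute{e}t}*}i_{\mr{\acute{e}t}}^{-1}\overline{Y}=i_{\mr{kfl}*}i_{\mr{kfl}}^{-1}\overline{Y}$ without further argument. Your passage to right adjoints for the $j_!$ term and your section-level computation for the $i_*$ term correctly supply the details the paper leaves implicit, but the overall strategy is the same.
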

\begin{proof}
By the proof of Lemma \ref{lem2}, we have a short exact sequence $$0\rightarrow j_{\mr{\acute{e}t}!}j_{\mr{\acute{e}t}}^{-1}\overline{Y}\rightarrow \overline{Y}\rightarrow i_{\mr{\acute{e}t}*}i_{\mr{\acute{e}t}}^{-1}\overline{Y}\rightarrow 0$$
of sheaves of abelian groups on $(\mr{fs}/S)_{\mr{kfl}}$. Since we have $j_{\mr{\acute{e}t}!}j_{\mr{\acute{e}t}}^{-1}\overline{Y}=j_{\mr{kfl}!}j_{\mr{kfl}}^{-1}\overline{Y}$ and $i_{\mr{\acute{e}t}*}i_{\mr{\acute{e}t}}^{-1}\overline{Y}=i_{\mr{kfl}*}i_{\mr{kfl}}^{-1}\overline{Y}$, the result follows.
\end{proof}

Applying the adjoint functors $\delta^{-1}$ and $\delta_*$ to the short exact sequence $$0\rightarrow \overline{Y}\rightarrow \tilde{A}\rightarrow A\rightarrow 0,$$
we get the following commutative diagram
\begin{equation}\label{eq1.5}
\xymatrix{
0\ar[r] &\overline{Y}\ar[r]\ar@{=}[d] &\tilde{A}\ar[r]\ar@{=}[d] &A\ar[r]\ar[d] &0 \\
0\ar[r] &\overline{Y}\ar[r] &\tilde{A}\ar[r] &\delta_*\delta^{-1}A\ar[r] &R^1\delta_*\overline{Y}
}
\end{equation}
with exact rows, where the vertical maps are given by the adjunction $(\delta^{-1},\delta_*)$. In order to prove Theorem \ref{thm}, it is enough to show $R^1\delta_*\overline{Y}=0$. 

\begin{lem}\label{lem3}
The sheaf $R^1\delta_*\overline{Y}$ is zero.
\end{lem}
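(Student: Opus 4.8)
The plan is to prove $R^1\delta_*\overline{Y}=0$ by induction on the length $n$ of the stratification (\ref{eq.const}) chosen in the proof of Lemma \ref{lem2}, running the very same dévissage but now in degree one. Throughout I would use that, since $\overline{Y}$ is a sheaf both on $\Set$ and (by Lemma \ref{lem2}) on $\Skfl$, the stalk of $R^q\delta_*\overline{Y}$ at a geometric point $\bar s$ is computed by the Kummer flat cohomology of the strict localization at $\bar s$; in particular the formation of $R^q\delta_*\overline{Y}$ is compatible with strict \'etale localization on the base.

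First I would treat the base case $n=1$, in which $\overline{Y}$ is locally constant on $\Set$. By the compatibility with strict \'etale localization, and after passing to a strict \'etale cover trivialising $\overline{Y}$, I may assume $\overline{Y}$ is the constant sheaf attached to a finitely generated free $\Z$-module; by additivity of $R^1\delta_*$ this reduces the base case to the single assertion $R^1\delta_*\Z=0$. To prove the latter I factor $\delta=\varepsilon_{\mr{\acute{e}t}}\circ\mu$, where $\mu\colon\Skfl\to\Sket$ is the forgetful map of sites. The constant sheaf $\Z$ is representable by an \'etale, hence smooth, group scheme, so the comparison between the Kummer flat and Kummer \'etale topologies for smooth commutative group schemes (the logarithmic analogue of the agreement of fppf and \'etale cohomology for smooth group schemes; cf. \cite{niz1}) gives $\mu_*\Z=\Z$ and $R^q\mu_*\Z=0$ for $q\geq 1$. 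The Leray spectral sequence for $\delta=\varepsilon_{\mr{\acute{e}t}}\circ\mu$ then collapses to $R^1\delta_*\Z=R^1\varepsilon_{\mr{\acute{e}t},*}\Z$, and this vanishes because the stalk of $R^1\varepsilon_{\mr{\acute{e}t},*}\Z$ at a log geometric point is the continuous degree-one cohomology of the profinite Kummer \'etale fundamental group with coefficients in the discrete torsion-free group $\Z$, and a profinite group admits no nonzero continuous homomorphism to $\Z$. (Alternatively one may simply quote the Kummer \'etale counterpart $R^1\varepsilon_{\mr{\acute{e}t},*}\overline{Y}=0$ established in the proof of \cite[Thm. 5.1]{k-k-n4}.)

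For the inductive step $n>1$ I would apply $R\delta_*$ to the short exact sequence
$$0\rightarrow j_{\mr{kfl}!}j_{\mr{kfl}}^{-1}\overline{Y}\rightarrow \overline{Y}\rightarrow i_{\mr{kfl}*}i_{\mr{kfl}}^{-1}\overline{Y}\rightarrow 0$$
of Corollary \ref{cor} and exploit the resulting long exact sequence of higher direct images. For the closed part, the functor $i_{\mr{kfl}*}$ is exact and, having the exact left adjoint $i_{\mr{kfl}}^{-1}$, preserves injectives; combined with the identity of maps of sites $\delta\circ i_{\mr{kfl}}=i_{\mr{\acute{e}t}}\circ\delta'$ (where $\delta'$ denotes the analogue of $\delta$ for $S_{n-1}$) and the exactness of $i_{\mr{\acute{e}t}*}$, this yields $R^1\delta_*(i_{\mr{kfl}*}i_{\mr{kfl}}^{-1}\overline{Y})=i_{\mr{\acute{e}t}*}R^1\delta'_*(i_{\mr{\acute{e}t}}^{-1}\overline{Y})$, which vanishes by the induction hypothesis applied to the length-$(n-1)$ stratification of $S_{n-1}$. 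For the open part I would show $R^1\delta_*(j_{\mr{kfl}!}j_{\mr{kfl}}^{-1}\overline{Y})=0$ stalkwise: at a geometric point lying over $U_n$ the sheaf agrees with $\overline{Y}$, whose $R^1\delta_*$ vanishes by the base case, while at a geometric point lying over $S_{n-1}$ one extends the support computation of the Claim in the proof of Lemma \ref{lem2} from degree zero to degree one. Feeding both vanishings into the long exact sequence then squeezes $R^1\delta_*\overline{Y}=0$, completing the induction.

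The main obstacle is the open-stratum term $R^1\delta_*(j_{\mr{kfl}!}j_{\mr{kfl}}^{-1}\overline{Y})$: unlike $i_{\mr{kfl}*}$, the extension-by-zero functor $j_{\mr{kfl}!}$ is only a left adjoint and does not preserve injectives, so there is no formal base-change identity $R^q\delta_*\circ j_{\mr{kfl}!}=j_{\mr{\acute{e}t}!}\circ R^q\delta'_*$ available to invoke, and the vanishing must instead be extracted through stalks and the support estimate of Lemma \ref{lem2}. The second delicate point is the base case itself, which ultimately rests on the comparison between Kummer flat and Kummer \'etale cohomology for smooth group schemes together with the torsion-freeness of $\overline{Y}$; it is precisely this torsion-freeness that makes the degree-one cohomology of the (pro-finite) relevant fundamental group vanish.
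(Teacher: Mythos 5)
Your dévissage is the same as the paper's: induction on the length $n$ of the stratification (\ref{eq.const}), the short exact sequence of Corollary \ref{cor}, and the closed-stratum term handled via the Leray spectral sequence, the exactness of $i_{\mr{\acute{e}t}*}$, and the induction hypothesis --- that part matches the paper and is fine. The gap is exactly where you flag ``the main obstacle'': the open-stratum term $R^1\delta_*(j_{\mr{kfl}!}j_{\mr{kfl}}^{-1}\overline{Y})$. Your plan to ``extend the support computation of the Claim from degree zero to degree one'' at points of $S_{n-1}$ does not go through. The stalk of $R^1\delta_*(j_{\mr{kfl}!}F)$ at a point $u$ of the closed stratum is $\varinjlim_V H^1((\mr{fs}/V)_{\mr{kfl}},j_{\mr{kfl}!}F)$ over \'etale neighbourhoods $V$ of $u$, and a degree-one class is represented by a cocycle living on a Kummer flat cover of \emph{all} of $V$; every such $V$ meets $U_n$, where $j_{!}F$ is nonzero, so one cannot kill the class by shrinking around the preimages of $u$ as in the degree-zero argument. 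In degree zero the section was a germ that could be localized at a single preimage point and seen to vanish there; in degree one there is no such localization, and ``$j_!F$ is supported over $U_n$'' gives no control on $R^1$.

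The paper sidesteps this entirely: it observes that $j_{\mr{\acute{e}t}!}j_{\mr{\acute{e}t}}^{-1}\overline{Y}$ is globally represented by an \'etale group scheme over $S$, and then invokes the argument of \cite[Lem. 2.4]{zha3} to conclude $R^1\delta_*j_{\mr{kfl}!}j_{\mr{kfl}}^{-1}\overline{Y}=0$; this single step also subsumes the base case $n=1$. Your own base case (reduction to $R^1\delta_*\Z=0$ and a Kummer flat/Kummer \'etale comparison for smooth group schemes, followed by vanishing of continuous $H^1$ of a profinite group with values in $\Z$) is plausible in outline, but it rests on a comparison theorem you only gesture at and which is usually stated for affine or finite type group schemes, whereas the constant group scheme $\Z$ is neither. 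To repair the proof, replace the stalkwise support argument for the open stratum by the representability observation together with the vanishing of $R^1\delta_*$ for \'etale group schemes.
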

\begin{proof}
We proceed by induction on $n$, with $n$ as in (\ref{eq.const}). Let the notation be as in the proof of Lemma \ref{lem2}. The short exact sequence
$$0\rightarrow j_{\mr{kfl}!}j_{\mr{kfl}}^{-1}\overline{Y}\rightarrow \overline{Y}\rightarrow i_{\mr{kfl}*}i_{\mr{kfl}}^{-1}\overline{Y}\rightarrow 0$$
induces an exact sequence
\begin{equation}\label{eq1.6}
R^1\delta_*j_{\mr{kfl}!}j_{\mr{kfl}}^{-1}\overline{Y}\rightarrow R^1\delta_*\overline{Y}\rightarrow R^1\delta_*i_{\mr{kfl}*}i_{\mr{kfl}}^{-1}\overline{Y}.
\end{equation}
In the proof of Lemma \ref{lem2}, we have seen that the sheaf $j_{\mr{kfl}!}j_{\mr{kfl}}^{-1}\overline{Y}=j_{\mr{\acute{e}t}!}j_{\mr{\acute{e}t}}^{-1}\overline{Y}$ is represented by an \'etale group scheme over $S$. Then the argument from \cite[Lem. 2.4]{zha3} also works for $j_{\mr{kfl}!}j_{\mr{kfl}}^{-1}\overline{Y}$, hence we have $R^1\delta_*j_{\mr{kfl}!}j_{\mr{kfl}}^{-1}\overline{Y}=0$. We are left with proving that $R^1\delta_* i_{\mr{kfl}*}i_{\mr{kfl}}^{-1}\overline{Y}$ is zero.

The Leray spectral sequences for $\delta_* i_{\mr{kfl}*}$ and $i_{\mr{\acute{e}t}*}\delta_*$ give us the following diagram
$$\xymatrix{
0\ar[r] &R^1\delta_* i_{\mr{kfl}*}i_{\mr{kfl}}^{-1}\overline{Y}\ar[r]  &R^1(\delta i_{\mr{kfl}})_*i_{\mr{kfl}}^{-1}\overline{Y}\ar@{=}[d]  \\
0\ar[r] &R^1i_{\mr{\acute{e}t}*} \delta_*i_{\mr{kfl}}^{-1}\overline{Y}\ar[r]  &R^1(i_{\mr{\acute{e}t}} \delta)_*i_{\mr{kfl}}^{-1}\overline{Y}\ar[r] &i_{\mr{\acute{e}t}*} R^1\delta_*i_{\mr{kfl}}^{-1}\overline{Y} 
}$$
with exact rows. Since $i$ is a closed immersion, the functor $i_{\mr{\acute{e}t}*}$ has trivial higher derived functors. The sheaf $R^1\delta_* i_{\mr{kfl}}^{-1}\overline{Y}$ is zero by the induction hypothesis, so is the sheaf $R^1(i_{\mr{\acute{e}t}} \delta)_*i_{\mr{kfl}}^{-1}\overline{Y}$. It follows that the sheaf $R^1\delta_* i_{\mr{kfl}*}i_{\mr{kfl}}^{-1}\overline{Y}$ is zero.
\end{proof}

By Lemma \ref{lem3} and the diagram (\ref{eq1.5}), we conclude Theorem \ref{thm}.

Since $A$ is a sheaf on $\Skfl$ by Theorem \ref{thm}, we can ask what the quotient of $A$ by $G$ on $\Skfl$ is, or equivalently, how the pull-back of the short exact sequence (\ref{eq1.2}) to $\Skfl$ looks like.

\begin{prop}
Let $\mc{Q}$ be $\mc{H}om_{\Skfl}(X,\Gmlb)^{(Y)}$. Then the pullback of the sheaf $\mc{Q}_{\mr{\acute{e}t}}/\overline{Y}$ on $\Set$ to $\Skfl$ is canonically identified with $\mc{Q}/\overline{Y}$. Hence we have a canonical exact sequence 
\begin{equation}\label{eq1.7}
0\rightarrow G\rightarrow A\rightarrow \mc{Q}/\overline{Y}\rightarrow 0 
\end{equation}
on $\Skfl$. We also call $\mc{Q}/\overline{Y}$ the discrete part of $A$.
\end{prop}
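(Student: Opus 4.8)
The plan is to produce (\ref{eq1.7}) by pulling back the étale short exact sequence (\ref{eq1.2}) along $\delta$ and identifying the three terms on $\Skfl$. The inverse image $\delta^{-1}$ is exact, so it carries (\ref{eq1.2}) to a short exact sequence $0\to\delta^{-1}G\to\delta^{-1}A\to\delta^{-1}(\mc{Q}_{\mr{\acute{e}t}}/\overline{Y})\to 0$ on $\Skfl$. The semi-abelian part $G$ is a group scheme, hence a Kummer flat sheaf with $\delta^{-1}G=G$ by \cite[Thm. 5.2]{k-k-n4}; since $A$ is a Kummer flat sheaf by Theorem \ref{thm} we likewise have $\delta^{-1}A=A$; and $\delta^{-1}\overline{Y}=\overline{Y}$ by Lemma \ref{lem2}. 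As $\delta^{-1}$ commutes with the quotient, everything is therefore reduced to the single identification $\delta^{-1}\mc{Q}_{\mr{\acute{e}t}}\cong\mc{Q}$ of subsheaves of $\mc{H}om_{\Skfl}(X,\Gmlb)$, compatibly with the inclusions of $\overline{Y}$.

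First I would record the basic compatibility $\delta^{-1}(\Gml/\Gm)_{\Set}\cong\Gmlb$. This follows by applying the exact functor $\delta^{-1}$ to the defining sequence $0\to\Gm\to\Gml\to(\Gml/\Gm)_{\Set}\to 0$ on $\Set$: since $\Gm$ and $\Gml$ are already sheaves for the Kummer flat topology (the latter by \cite[Cor. 2.22]{niz1}), their pullbacks are themselves, and comparison with the defining sequence of $\Gmlb$ on $\Skfl$ gives the claim. Next, because $X$ is a locally constant sheaf of finitely generated free $\Z$-modules, a strict étale cover trivialising $X$ presents $\mc{H}om_{\Set}(X,(\Gml/\Gm)_{\Set})$ as a finite product of copies of $(\Gml/\Gm)_{\Set}$, with gluing given by the monodromy of $X$. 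As $\delta^{-1}$ commutes with finite products and with pullback along the trivialising cover, and preserves the gluing data, I obtain $\delta^{-1}\mc{H}om_{\Set}(X,(\Gml/\Gm)_{\Set})\cong\mc{H}om_{\Skfl}(X,\Gmlb)$.

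It then remains to check that the $(Y)$-superscript is compatible with $\delta^{-1}$, i.e. that the pullback of the subsheaf $\mc{Q}_{\mr{\acute{e}t}}$ cut out of $\mc{H}om_{\Set}(X,(\Gml/\Gm)_{\Set})$ by the admissibility condition relative to the pairing and $Y$ agrees with the subsheaf $\mc{Q}$ cut out of $\mc{H}om_{\Skfl}(X,\Gmlb)$ by the same condition. This is the step I expect to be the main obstacle. The condition defining the $(Y)$-part is local on the base $S$ and is phrased purely in terms of the characteristic data: locally a section $h$ lies in the $(Y)$-part precisely when, after subtracting a suitable $\langle\,\cdot\,,y\rangle$ with $y\in Y$, its image in $\Gmlb$ is effective, equivalently when it lies in the finitely generated rational cone attached to the admissible pairing. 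The point to verify is that this is insensitive to the refinement of $\Gmlb$ induced by a Kummer cover: such a cover only enlarges the characteristic sheaf to a finite-index overlattice and scales the pairing accordingly, leaving the underlying rational cone unchanged, so that $h$ satisfies the condition over a Kummer cover if and only if it does over the base. Granting this, the $(Y)$-condition is tested on the same local data on both sites, whence $\delta^{-1}\mc{Q}_{\mr{\acute{e}t}}=\mc{Q}$.

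Finally I would assemble the pieces. Combining $\delta^{-1}\mc{Q}_{\mr{\acute{e}t}}=\mc{Q}$ with $\delta^{-1}\overline{Y}=\overline{Y}$ and the exactness of $\delta^{-1}$ gives $\delta^{-1}(\mc{Q}_{\mr{\acute{e}t}}/\overline{Y})\cong\mc{Q}/\overline{Y}$ canonically; feeding this into the pulled-back sequence of the first paragraph yields the short exact sequence (\ref{eq1.7}) on $\Skfl$ and exhibits $\mc{Q}/\overline{Y}$ as the Kummer flat discrete part of $A$.
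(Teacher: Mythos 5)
Your proof takes essentially the same route as the paper's: pull back the \'etale presentation of $A$ along $\delta$, use that $G$, $\overline{Y}$, $\tilde{A}$ and $A$ are already sheaves for the Kummer flat topology (by \cite[Thm.~5.2]{k-k-n4}, Lemma \ref{lem2}, Lemma \ref{lem1} and Theorem \ref{thm}, so that $\delta^{-1}$ fixes them), and identify $\delta^{-1}\mc{Q}_{\mr{\acute{e}t}}$ with $\mc{Q}$; whether one pulls back the full diagram (\ref{eq1.3}) as the paper does or just the bottom row (\ref{eq1.2}) as you do is immaterial, since $\delta^{-1}$ is exact. The one substantive difference is that the paper obtains the key identification $\delta^{-1}\mc{Q}_{\mr{\acute{e}t}}=\mc{Q}$ by simply citing \cite[Lem.~2.3]{zha3}, whereas you attempt to prove it directly: your first two reductions (that $\delta^{-1}(\Gml/\Gm)_{\Set}\cong\Gmlb$ by exactness of $\delta^{-1}$ and the kfl-sheaf property of $\Gm$ and $\Gml$, and that $\delta^{-1}$ commutes with $\mc{H}om(X,-)$ for $X$ locally constant free of finite rank) are sound, but the compatibility of the $(Y)$-condition with Kummer covers, which you flag as the main obstacle and then assume (``granting this''), is precisely the content of the cited lemma and is left as a sketch. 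So either carry that verification out in detail or replace it by the citation, as the paper does; with that in hand, your assembly of (\ref{eq1.7}) is correct.
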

\begin{proof}
By \cite[Thm. 5.2]{k-k-n4}, $G$ is a sheaf on $S_{\mr{kfl}}$. By \cite[Lem. 2.3]{zha3}, we have $\delta^{-1}\mc{Q}_{\mr{\acute{e}t}}=\mc{Q}$.  By Lemma \ref{lem1} (resp. Lemma \ref{lem2}, resp. Theorem \ref{thm}), $\tilde{A}$ (resp. $\overline{Y}$, resp. $A$) is a sheaf on $S_{\mr{kfl}}$. Hence the pull-back of (\ref{eq1.3}) to $S_{\mr{kfl}}$ gives rise to a commutative diagram
\begin{equation}\label{eq1.9}
\xymatrix{
&&0\ar[d] &0\ar[d]   \\
&&\overline{Y}\ar@{=}[r]\ar[d] &\overline{Y}\ar[d]  \\
0\ar[r] &G\ar[r]\ar@{=}[d] &\tilde{A}\ar[r]\ar[d] &\mc{Q}\ar[r]\ar[d] &0  \\
0\ar[r] &G\ar[r] &A\ar[r]\ar[d] &\delta^{-1}(\mc{Q}_{\mr{\acute{e}t}}/\overline{Y})\ar[r]\ar[d] &0  \\
&&0 &0
}
\end{equation}
with exact rows and columns. It follows that $\delta^{-1}(\mc{Q}_{\mr{\acute{e}t}}/\overline{Y})=\mc{Q}/\overline{Y}$, and we have a short exact sequence $0\rightarrow G\rightarrow A\rightarrow \mc{Q}/\overline{Y}\rightarrow 0$ on $S_{\mr{kfl}}$.
\end{proof}

\section{Isogenies of log abelian varieties}

First of all, we recall the definitions of certain finite group objects on the site $S_{\mr{kfl}}$. They are needed for defining isogenies of log abelian varieties, in the same way as finite flat group schemes are needed for defining isogenies of abelian schemes. The theory of log finite flat group schemes is due to Kazuya Kato, and we refer to \cite[Appendix]{zha3} for a brief summary and detailed references.

\begin{defn}\label{defn2.1}
The category $(\mr{fin}/S)_{\mr{c}}$ is the full subcategory of the category of sheaves of finite abelian groups on $S_{\mr{kfl}}$ consisting of objects which are representable by a classical finite flat group scheme over $S$. Here ``classical'' means that the log structure of the representing log scheme is the one induced from $S$. 

The category $(\mr{fin}/S)_{\mr{f}}$ is the full subcategory of the category of sheaves of finite abelian groups on $S_{\mr{kfl}}$ consisting of objects which are representable by a classical finite flat group scheme over a log flat cover of $S$. Let $F\in (\mr{fin}/S)_{\mr{f}}$, let $U\rightarrow S$ be a log flat cover of $S$ such that $F_U:=F\times_S U\in (\mr{fin}/S)_{\mr{c}}$, the rank of $F$ is defined to be  the rank of $F_U$ over $U$.

The category $(\mr{fin}/S)_{\mr{r}}$ is the full subcategory of $(\mr{fin}/S)_{\mr{f}}$ consisting of objects which are representable by a log scheme over $S$. We call an object of $(\mr{fin}/S)_{\mr{r}}$ a log finite flat group scheme over $S$.

The category $(\mr{fin}/S)_{\mr{d}}$ is the full subcategory of $(\mr{fin}/S)_{\mr{r}}$ consisting of objects whose Cartier duals also lie in $(\mr{fin}/S)_{\mr{r}}$.
\end{defn}

Now we are ready to define isogenies of log abelian varieties.

\begin{defn}\label{def2.1}
Let $f:A\rightarrow A'$ be a homomorphism of log abelian varieties over $S$. We call $f$ an isogeny of log abelian varieties, if $\mr{ker}f\in (\mr{fin}/S)_{\mr{r}}$ and $f$ is surjective.
\end{defn}

Isogenies of abelian varieties can be described by several equivalent conditions. We will give equivalent descriptions of isogenies of log abelian varieties. We need a lemma first.

\begin{lem}\label{lem2.1}
Let $f:A\rightarrow A'$ be a homomorphism of log abelian varieties over $S$. Let $G$ (resp. $G'$) be the semi-abelian part of $A$ (resp. $A'$), $\mc{Q}_{\mr{\acute{e}t}}/\overline{Y}$ (resp. $\mc{Q}'_{\mr{\acute{e}t}}/\overline{Y}'$) the discrete part of $A$ (resp. $A'$) on $\Set$, and $\mc{Q}/\overline{Y}$ (resp. $\mc{Q}'/\overline{Y}'$) the discrete part of $A$ (resp. $A'$) on $\Skfl$. Then we have a canonical commutative diagram
\begin{equation}\label{eq2.1}
\xymatrix{
0\ar[r] &G\ar[r]\ar[d]^{f_{\mr{c}}} &A\ar[r]\ar[d]^f &\mc{Q}_{\mr{\acute{e}t}}/\overline{Y} \ar[r]\ar[d]^{f_{\mr{\acute{e}t,d}}} &0   \\
0\ar[r] &G'\ar[r] &A'\ar[r] &\mc{Q}'_{\mr{\acute{e}t}}/\overline{Y}'\ar[r] &0 
}
\end{equation}
with exact rows on $\Set$, and a canonical commutative diagram 
\begin{equation}\label{eq2.2}
\xymatrix{
0\ar[r] &G\ar[r]\ar[d]^{f_{\mr{c}}} &A\ar[r]\ar[d]^f &\mc{Q}/\overline{Y}\ar[r]\ar[d]^{f_{\mr{d}}} &0   \\
0\ar[r] &G'\ar[r] &A'\ar[r] &\mc{Q}'/\overline{Y}'\ar[r] &0 
}
\end{equation}
with exact rows on $\Skfl$. And the homomorphism $f_{\mr{d}}$ is just the sheafification of $f_{\mr{\acute{e}t,d}}$. We call $f_{\mr{c}}$ the connected (or semi-abelian) part of $f$, and $f_{\mr{\acute{e}t,d}}$ (resp. $f_{\mr{d}}$) the discrete part of $f$ on $\Set$ (resp. $\Skfl$).
\end{lem}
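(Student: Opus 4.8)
The plan is to show that $f$ is automatically compatible with the two-step filtration ``semi-abelian part $\subseteq$ whole $\twoheadrightarrow$ discrete part''; once this is established on $\Set$, everything on $\Skfl$ follows formally from the exactness of $\delta^{-1}$ together with the kfl-sheaf results of Section 1. So I would first work entirely on $\Set$ and produce the diagram (\ref{eq2.1}). The only substantial point there is that $f$ carries the semi-abelian part $G$ of $A$ into the semi-abelian part $G'$ of $A'$; equivalently, that the composite
$$ g\colon\ G \hookrightarrow A \xrightarrow{\ f\ } A' \twoheadrightarrow \mc{Q}'_{\mr{\acute{e}t}}/\overline{Y}' $$
vanishes. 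Granting $g=0$, the universal property of $G'=\ker(A'\to \mc{Q}'_{\mr{\acute{e}t}}/\overline{Y}')$ yields a unique $f_{\mr{c}}\colon G\to G'$, and passing to cokernels of the vertical maps produces $f_{\mr{\acute{e}t,d}}$. Commutativity of (\ref{eq2.1}) and exactness of its rows are then immediate from the defining sequence (\ref{eq1.2}), and all three maps are canonical because each is obtained by a canonical construction (restriction to a kernel, passage to a cokernel).

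The crux, and the step I expect to be the main obstacle, is the vanishing of $g$. Here I would exploit the contrast between the ``connected/classical'' nature of $G$ and the ``discrete'' nature of the target. Concretely, $G$ is represented by a semi-abelian scheme over $S$, so $G\to S$ has geometrically connected fibres and its relative component sheaf is trivial. On the other hand, $\mc{Q}'_{\mr{\acute{e}t}}/\overline{Y}'$ is built from $\mc{H}om_{\Set}(X',(\Gml/\Gm)_{\Set})$ and the constant sheaf $Y'$; since $(\Gml/\Gm)_{\Set}=\overline{M}^{\mr{gp}}$ has finitely generated free stalks at every geometric point of an fs log scheme, both $\mc{Q}'_{\mr{\acute{e}t}}$ and $\overline{Y}'$, and hence their quotient, are étale sheaves with finitely generated (hence discrete) stalks. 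Working étale-locally and stratifying so that the target becomes a constant sheaf $M$ on a connected base, a homomorphism of abelian sheaves $G\to M$ factors through $\mc{H}om(\pi_0(G/S),M)=0$; therefore $g=0$. (This vanishing is also essentially the functoriality of the semi-abelian part in the Kajiwara--Kato--Nakayama structure theory, which could be invoked instead.)

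Finally I would descend the diagram to $\Skfl$. By Theorem \ref{thm} and Lemmas \ref{lem1}, \ref{lem2} the sheaves $G,G',A,A',\overline{Y},\overline{Y}'$ are all kfl sheaves, and by \cite[Lem.\ 2.3]{zha3} one has $\delta^{-1}\mc{Q}_{\mr{\acute{e}t}}=\mc{Q}$, whence $\delta^{-1}(\mc{Q}_{\mr{\acute{e}t}}/\overline{Y})=\mc{Q}/\overline{Y}$ as in (\ref{eq1.7}) (and likewise for the primed objects). Since $\delta^{-1}$ is exact, applying it to (\ref{eq2.1}) gives the commutative diagram (\ref{eq2.2}) with exact rows, $f_{\mr{c}}$ being unchanged (it already lives on $\Skfl$) and $f_{\mr{d}}:=\delta^{-1}f_{\mr{\acute{e}t,d}}$. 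Because $\delta^{-1}$ computes the inverse image by pulling back presheaves along $\delta$ and then kfl-sheafifying, $f_{\mr{d}}$ is precisely the sheafification of $f_{\mr{\acute{e}t,d}}$, which is the last assertion of the lemma.
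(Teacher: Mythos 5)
Your proof is correct, and its first half is essentially the paper's: the only real content in (\ref{eq2.1}) is the vanishing of $\mr{Hom}_{\Set}(G,\mc{Q}'_{\mr{\acute{e}t}}/\overline{Y}')$, which the paper simply quotes from \cite[9.2]{k-k-n2}. Your connectedness-versus-discreteness sketch is the idea behind that reference, though as written the reduction ``stratify until the target is constant'' is loose --- a homomorphism of sheaves on the big site $\fsSet$ is not determined stratum by stratum; the clean version evaluates $g$ at the universal point $\mathrm{id}_G\in G(G)$ and uses that $G\to S$ is strict with geometrically connected fibres and a zero section --- so in practice you would indeed invoke \cite[9.2]{k-k-n2}, as you allow for. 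Where you genuinely diverge is in (\ref{eq2.2}). The paper reproves the Hom-vanishing directly on $\Skfl$ by adjunction: it computes $\delta_*(\mc{Q}'/\overline{Y}')=(\mc{Q}'_{\mr{\acute{e}t}}\otimes_{\Z}\Q)/\overline{Y}'$ using $R^1\delta_*\overline{Y}'=0$ (Lemma \ref{lem3}) together with \cite[Lem. 2.3]{zha3}, and then reruns the argument of \cite[9.2]{k-k-n2} for the $\Q$-localized target to get $\mr{Hom}_{\Skfl}(G,\mc{Q}'/\overline{Y}')=0$. You instead apply the exact functor $\delta^{-1}$ to the already-constructed diagram (\ref{eq2.1}), using that $G$, $G'$, $A$, $A'$ are kfl sheaves and the identification $\delta^{-1}(\mc{Q}_{\mr{\acute{e}t}}/\overline{Y})=\mc{Q}/\overline{Y}$ from the Proposition following Theorem \ref{thm}. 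This is legitimate --- it is exactly how the paper itself produces (\ref{eq1.7}) and (\ref{eq1.9}) --- and it is arguably cleaner: it avoids the $\delta_*$ computation and the second run of the \cite[9.2]{k-k-n2} argument, and it makes the final assertion, that $f_{\mr{d}}$ is the sheafification of $f_{\mr{\acute{e}t,d}}$, true by construction rather than something to be checked afterwards. Both routes rest on the same inputs from Section 1, so nothing is lost; your version buys brevity, the paper's buys an explicit description of $\delta_*(\mc{Q}'/\overline{Y}')$ that is reused later in the proof of Proposition \ref{prop2.1}.
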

\begin{proof}
By \cite[9.2]{k-k-n2}, we have $\mr{Hom}_{\Set}(G,\mc{Q}'_{\mr{\acute{e}t}}/\overline{Y}')=0$. Then diagram (\ref{eq2.1}) follows. To achieve diagram (\ref{eq2.2}), it suffices to show that $\mr{Hom}_{\Skfl}(G,\mc{Q}'/\overline{Y}')=0$. By Lemma \ref{lem3} and \cite[Lem. 2.3]{zha3}, we have
$$\delta_*(\mc{Q}'/\overline{Y}')=\delta_*\mc{Q}'/\overline{Y}'=(\mc{Q}'_{\mr{\acute{e}t}}\otimes_{\Z}\Q)/\overline{Y}'.$$
It follows then 
$$\mr{Hom}_{\Skfl}(G,\mc{Q}'/\overline{Y}')=\mr{Hom}_{\Set}(G,\delta_*(\mc{Q}'/\overline{Y}'))=\mr{Hom}_{\Set}(G,(\mc{Q}'_{\mr{\acute{e}t}}\otimes_{\Z}\Q)/\overline{Y}').$$
The argument of \cite[9.2]{k-k-n2} also implies $\mr{Hom}_{\Set}(G,(\mc{Q}'_{\mr{\acute{e}t}}\otimes_{\Z}\Q)/\overline{Y}')=0$. Hence we get $\mr{Hom}_{\Skfl}(G,\mc{Q}'/\overline{Y}')=0$, which finishes the proof.

\end{proof}

\begin{prop}\label{prop2.1}
Suppose that the underlying scheme of $S$ is connected. Let $f:A\rightarrow A'$ be a homomorphism of log abelian varieties over $S$. Consider the following statements.
\begin{enumerate}[(1)]
\item $f$ is an isogeny.
\item $\mr{ker}f\in (\mr{fin}/S)_{\mr{f}}$ and $f$ is surjective.
\item $\mr{ker}f\in (\mr{fin}/S)_{\mr{r}}$ and $\mr{dim}A=\mr{dim}A'$.
\item $f$ is surjective and $\mr{dim}A=\mr{dim}A'$.
\end{enumerate}
Then we have $(1)\Leftrightarrow(2)\Leftrightarrow(3)\Rightarrow (4)$.
\end{prop}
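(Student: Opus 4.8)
The plan is to read off every condition from the two short exact sequences supplied by Lemma \ref{lem2.1}. Write $f_{\mr{c}}\colon G\to G'$ for the connected part and $f_{\mr{d}}\colon \mc{Q}/\overline{Y}\to \mc{Q}'/\overline{Y}'$ for the discrete part of $f$ on $\Skfl$, and recall that the dimension of a log abelian variety equals that of its semi-abelian part, so that $\mr{dim}\,A=\mr{dim}\,G$ and the equality $\mr{dim}\,A=\mr{dim}\,A'$ is equivalent to $\mr{dim}\,G=\mr{dim}\,G'$. Applying the snake lemma to the diagram (\ref{eq2.2}) gives the exact sequence
\[
0\to \mr{ker}\,f_{\mr{c}}\to \mr{ker}\,f\to \mr{ker}\,f_{\mr{d}}\xrightarrow{\partial}\mr{coker}\,f_{\mr{c}}\to \mr{coker}\,f\to \mr{coker}\,f_{\mr{d}}\to 0
\]
on $\Skfl$, which is the backbone of the argument: it lets me detect the two conditions ``surjective'' and ``finite kernel'' separately on $f_{\mr{c}}$ and on $f_{\mr{d}}$.

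I would first prove $(1)\Leftrightarrow(3)$. Since $\mr{ker}\,f_{\mr{c}}$ is a closed subgroup scheme of $G$ sitting inside $\mr{ker}\,f$, the hypothesis $\mr{ker}\,f\in(\mr{fin}/S)_{\mr{r}}$, present in both $(1)$ and $(3)$, forces $\mr{ker}\,f_{\mr{c}}$ to be a finite flat group scheme over $S$, so $f_{\mr{c}}$ has finite kernel. For $(1)\Rightarrow(3)$ I use that $f$ is surjective, i.e. $\mr{coker}\,f=0$; the snake sequence then makes $\partial$ surject onto $\mr{coker}\,f_{\mr{c}}=G'/f_{\mr{c}}(G)$, which is a semi-abelian scheme. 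Because the discrete part $\mc{Q}/\overline{Y}$ is built from the Kummer sheaf $\Gmlb$ and contains no non-trivial semi-abelian subquotient (the counterpart, for subsheaves of the discrete part, of the vanishing $\mr{Hom}_{\Skfl}(G,\mc{Q}'/\overline{Y}')=0$ established in the proof of Lemma \ref{lem2.1}), this quotient must vanish; hence $f_{\mr{c}}$ is a surjection of semi-abelian schemes with finite kernel, i.e. an isogeny, and $\mr{dim}\,G=\mr{dim}\,G'$. For $(3)\Rightarrow(1)$ I run the same bookkeeping in reverse: over the connected base $S$ a homomorphism of semi-abelian schemes with finite kernel and $\mr{dim}\,G=\mr{dim}\,G'$ is automatically surjective, so $\mr{coker}\,f_{\mr{c}}=0$; the snake sequence then gives a surjection $\mr{ker}\,f\twoheadrightarrow\mr{ker}\,f_{\mr{d}}$, so $\mr{ker}\,f_{\mr{d}}$ is finite, while the isogeny $f_{\mr{c}}$ matches the torus ranks of $G$ and $G'$ and hence the ranks of $Y$ and $Y'$. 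Thus $f_{\mr{d}}$ is injective up to finite kernel between discrete parts of equal rank, and is surjective because $\Gmlb$ is divisible on $\Skfl$. Feeding $\mr{coker}\,f_{\mr{c}}=0$ and $\mr{coker}\,f_{\mr{d}}=0$ back into the snake sequence yields $\mr{coker}\,f=0$, i.e. $f$ is surjective, which is $(1)$.

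Next $(1)\Leftrightarrow(2)$. The implication $(1)\Rightarrow(2)$ is immediate from the inclusion $(\mr{fin}/S)_{\mr{r}}\subseteq(\mr{fin}/S)_{\mr{f}}$ of Definition \ref{defn2.1}. For $(2)\Rightarrow(1)$ I must show that $\mr{ker}\,f$, a priori only representable by a classical finite flat group scheme over a log flat cover of $S$, is in fact representable by a log scheme over $S$. I would again use the extension $0\to\mr{ker}\,f_{\mr{c}}\to\mr{ker}\,f\to\mr{ker}'\to 0$ coming from the snake sequence, where $\mr{ker}\,f_{\mr{c}}\in(\mr{fin}/S)_{\mr{c}}$ and $\mr{ker}'$ is the finite subsheaf of the discrete part $\mc{Q}/\overline{Y}$ that is the image of $\mr{ker}\,f$. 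Finite subgroups of the discrete part are log finite flat group schemes because the relevant torsion lives in $\Gmlb$ and acquires a log-scheme structure from the Kummer log structure, so $\mr{ker}'\in(\mr{fin}/S)_{\mr{r}}$; an extension of an object of $(\mr{fin}/S)_{\mr{r}}$ by an object of $(\mr{fin}/S)_{\mr{c}}$ is again representable by a log scheme over $S$, whence $\mr{ker}\,f\in(\mr{fin}/S)_{\mr{r}}$ and $f$ is an isogeny. Finally $(1)\Rightarrow(4)$ needs nothing new: $(1)\Rightarrow(3)$ already gives $\mr{dim}\,A=\mr{dim}\,A'$, and surjectivity of $f$ is part of $(1)$.

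I expect the main obstacle to be the representability step in $(2)\Rightarrow(1)$. The categories $(\mr{fin}/S)_{\mr{f}}$ and $(\mr{fin}/S)_{\mr{r}}$ are genuinely different, and since log abelian varieties are in general only sheaves and not log schemes, one cannot simply realise $\mr{ker}\,f$ as a fibre product of log schemes; the passage from $(\mr{fin}/S)_{\mr{f}}$ to $(\mr{fin}/S)_{\mr{r}}$ is really a question of the effectivity of log flat descent of a representing log scheme, which is why the structural decomposition of $\mr{ker}\,f$ into its connected and discrete parts, together with the log-schematic nature of finite subgroups of the discrete part, is needed. A secondary point requiring care is the absence of non-trivial semi-abelian subquotients in the discrete part, which is what rigidifies the snake sequence and underlies the separate control of $f_{\mr{c}}$ and $f_{\mr{d}}$ in $(1)\Leftrightarrow(3)$.
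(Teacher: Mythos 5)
Your overall architecture --- the snake lemma applied to diagram (\ref{eq2.2}) so that surjectivity and finiteness of the kernel can be tested separately on $f_{\mr{c}}$ and $f_{\mr{d}}$ --- is reasonable and close in spirit to the paper, which also reduces everything to the connected and discrete parts via Lemma \ref{lem2.1}. But two of your key steps are asserted rather than proved, and they are exactly where the difficulty lies. First, in $(2)\Rightarrow(1)$ you reduce to the claims that every finite subgroup sheaf of the discrete part lies in $(\mr{fin}/S)_{\mr{r}}$ and that an extension of an object of $(\mr{fin}/S)_{\mr{r}}$ by an object of $(\mr{fin}/S)_{\mr{c}}$ is again in $(\mr{fin}/S)_{\mr{r}}$. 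Neither is established, and the second in particular is a genuine representability/descent statement that is not formal: $(\mr{fin}/S)_{\mr{f}}$ and $(\mr{fin}/S)_{\mr{r}}$ differ precisely by the effectivity of such descent, as you yourself note. The paper sidesteps this entirely: since $\mr{ker}f\in(\mr{fin}/S)_{\mr{f}}$ is killed by some $n$, one has $\mr{ker}f=\mr{ker}(A[n]\to A'[n])$, and the representability of such kernels is the content of \cite[Prop.~18.1(1)]{k-k-n4}. Without that input (or an equivalent substitute) your $(2)\Rightarrow(1)$ is incomplete.

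Second, in $(3)\Rightarrow(1)$ the surjectivity of $f_{\mr{d}}$ cannot be obtained just from ``$\Gmlb$ is divisible'': the discrete part is $\mc{Q}/\overline{Y}$ with $\mc{Q}=\mc{H}om_{\Skfl}(X,\Gmlb)^{(Y)}$, and the superscript $(Y)$ (the admissibility/boundedness condition) means $\mc{Q}$ is not simply a divisible Hom-sheaf; one must show that the induced map $\mc{Q}\to\mc{Q}'$ is surjective compatibly with these conditions. The paper does this by invoking \cite[Thm.~7.6]{k-k-n2} to write $f_{\mr{\acute{e}t,d}}$ in terms of a pair of lattice maps $(u,v)$, proving $u$ is injective with cokernel killed by an integer (using constructibility of $\overline{X},\overline{X}'$ over a noetherian base), and then using $\delta_*\mc{Q}=\mc{Q}_{\mr{\acute{e}t}}\otimes_{\Z}\Q$ from \cite[Lem.~2.3]{zha3} to conclude $\mc{Q}\xrightarrow{\sim}\mc{Q}'$. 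Relatedly, your claim in $(1)\Rightarrow(3)$ that a subquotient of the discrete part admits no nontrivial semi-abelian quotient is plausible but unproved; the paper avoids it by passing to geometric fibres and citing \cite[Prop.~3.3]{zha3} (the connected part of an isogeny over a log point is an isogeny), then globalizing with the fibral flatness criterion. I would recommend either supplying proofs of your three unproved claims or replacing them with the citations the paper uses.
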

\begin{proof}
Apparently we have $(1)\Rightarrow (2)$. Conversely, $\mr{ker}f\in (\mr{fin}/S)_{\mr{f}}$ implies that $\mr{ker}f=\mr{ker}(A[n]\xrightarrow{f} A'[n])$ for some positive integer $n$. Then we get $\mr{ker}f\in (\mr{fin}/S)_{\mr{r}}$ by \cite[Prop. 18.1. (1)]{k-k-n4}. Hence we get $(2)\Rightarrow (1)$.

Now we suppose that $f$ is an isogeny. Then the base change $f_{\bar{s}}$ of $f$ to $(\mr{fs}/\bar{s})$ is an isogeny for any $s\in S$. Let $G$ (resp. $G'$) be the semi-abelian part of $A$ (resp. $A'$), and let $f_{\mr{c}}:G\rightarrow G'$ be the connected part of $f$. By \cite[Prop. 3.3]{zha3}, $f_{\mr{c},\bar{s}}$ is an isogeny for any $s\in S$. Then $f_{\mr{c}}$ is flat by fibral flatness criterion, and quasi-finite. Hence we have $\mathop{\mr{dim}}A=\mathop{\mr{dim}}G=\mathop{\mr{dim}}G'=\mathop{\mr{dim}}A'$. This shows $(3)\Leftarrow (1)\Rightarrow (4)$.

Lastly, we show $(3)\Rightarrow (1)$. By \cite[prop. 3.3]{zha3}, $f_{\mr{c},\bar{s}}$ is an isogeny for any $s\in S$. Then we have that $f_{\mr{c}}$ is faithfully flat, hence it is surjective. Let $f_{\mr{d}}$ be the discrete part of $f$. To show the surjectivity of $f$, it suffices to show the surjectivity of $f_{\mr{d}}$ by the diagram (\ref{eq2.2}). The surjectivity of $f_{\mr{d}}$ is a local problem, hence we may assume that the underlying scheme of $S$ is noetherian and the discrete parts of $A$ and $A'$ come from admissible pairings
$$\langle,\rangle:X\times Y\rightarrow (\Gml/\Gm)_{\Set}$$ and 
$$\langle,\rangle':X'\times Y'\rightarrow (\Gml/\Gm)_{\Set}$$
respectively. By \cite[Thm. 7.6]{k-k-n2}, the homomorphism $f_{\mr{\acute{e}t,d}}$ comes from a pair $(u,v)$ of homomorphisms $u:\overline{X}'\rightarrow\overline{X}$, $v:\overline{Y}\rightarrow\overline{Y}'$ such that $\langle u(x'),y\rangle=\langle x',v(y)\rangle'$ for any $x'\in\overline{X}', y\in\overline{Y}$, where $\overline{X}$, $\overline{Y}$, $\overline{X}'$ and $\overline{Y}'$ are as in \cite[Thm. 7.6]{k-k-n2}. Since $f_{\mr{d}}$ is the sheafification of $f_{\mr{\acute{e}t,d}}$, it also comes from $(u,v)$. For any $s\in S$, $f_{\mr{c},\bar{s}}$ being isogeny implies that $u_{\bar{s}}$ is injective of finite cokernel. Since both $\overline{X}$ and $\overline{X}'$ are constructible sheaves on the small \'etale site of $S$, and the underlying scheme of $S$ is noetherian, the homomorphism $u$ has to be injective with cokernel killed by some positive integer. By \cite[Lem. 2.3]{zha3}, we have $\delta_*\mc{Q}=\mc{Q}_{\mr{\acute{e}t}}\otimes_{\Z}\Q$ and $\delta_*\mc{Q}'=\mc{Q}'_{\mr{\acute{e}t}}\otimes_{\Z}\Q$. It follows that $\mc{Q}\rightarrow\mc{Q}'$ is an isomorphism, hence $f_{\mr{d}}$ is surjective. This finishes the proof.
\end{proof}

\begin{cor}\label{cor2.1}
Let $A$ be a log abelian variety over $S$, $n$ a positive integer. Then the multiplication by $n$ map $n_A:A\rightarrow A$ is an isogeny.
\end{cor}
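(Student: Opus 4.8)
The plan is to reduce the statement to Proposition \ref{prop2.1} and then verify the cheapest of its equivalent criteria. First I would observe that the property of being an isogeny is local on the base: by Definition \ref{def2.1} it is the conjunction of the condition $\mr{ker}(n_A)\in(\mr{fin}/S)_{\mr{r}}$, which is a representability condition and hence descends along a cover, together with the surjectivity of $n_A$ as a map of Kummer flat sheaves, which is likewise local. Consequently I may assume that the underlying scheme of $S$ is connected, which is precisely the hypothesis under which Proposition \ref{prop2.1} is available.

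With $S$ connected I would apply Proposition \ref{prop2.1} to the endomorphism $f=n_A:A\rightarrow A$, taking $A'=A$. The advantage of the formulation there is that it suffices to check condition (3), namely that $\mr{ker}(n_A)\in(\mr{fin}/S)_{\mr{r}}$ and $\mr{dim}A=\mr{dim}A'$. The dimension equality is immediate, since the source and target coincide. For the kernel, note that $\mr{ker}(n_A)=A[n]$ by definition of the multiplication-by-$n$ map, and $A[n]$ lies in $(\mr{fin}/S)_{\mr{r}}$ by \cite[Prop. 18.1 (1)]{k-k-n4}---the very same input already used to establish the implication $(2)\Rightarrow(1)$ in the proof of Proposition \ref{prop2.1}. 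Thus condition (3) holds, and the implication $(3)\Rightarrow(1)$ of Proposition \ref{prop2.1} yields that $n_A$ is an isogeny.

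I do not expect a serious obstacle, since essentially all the work has already been carried out in Proposition \ref{prop2.1} and in the cited finiteness of $A[n]$. The only point requiring a little care is the reduction to a connected base: one should confirm that the decomposition of $S$ into connected components is compatible with the formation of $\mr{ker}(n_A)$ and with surjectivity, so that verifying the isogeny property componentwise suffices. If one preferred to bypass Proposition \ref{prop2.1} entirely, one could instead argue straight from Definition \ref{def2.1}: the finiteness $\mr{ker}(n_A)=A[n]\in(\mr{fin}/S)_{\mr{r}}$ is again \cite[Prop. 18.1 (1)]{k-k-n4}, and the surjectivity of $n_A$ can be checked on the exact sequence (\ref{eq1.7}), where multiplication by $n$ is seen to be surjective on the semi-abelian part $G$ and on the discrete part $\mc{Q}/\overline{Y}$ separately. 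The route through Proposition \ref{prop2.1} is cleaner, so that is the one I would write up.
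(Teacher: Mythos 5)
Your proposal is correct and coincides with the paper's own proof: both verify condition (3) of Proposition \ref{prop2.1} by citing \cite[Prop. 18.1 (1)]{k-k-n4} for $A[n]\in(\mr{fin}/S)_{\mr{r}}$ and noting the trivial dimension equality, then invoke $(3)\Rightarrow(1)$. Your extra remark on reducing to a connected base is a reasonable point of care that the paper leaves implicit.
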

\begin{proof}
The kernel $A[n]$ of $n_A$ lies in $(\mr{fin}/S)_{\mr{r}}$ by \cite[Prop. 18.1. (1)]{k-k-n4}. Hence $n_A$ is an isogeny by the equivalence $(1)\Leftrightarrow(3)$ of Proposition \ref{prop2.1}.
\end{proof}

\section{Extending finite subgroup schemes of semi-stable abelian varieties}
From now on, we assume that $S=\mathop{\mr{Spec}}R$ with $R$ a complete discrete valuation ring. Let $K$ be the fraction field of $R$, $\pi$ a chosen uniformiser, and $k$ the residue field of $R$. We regard $S$ as an fs log scheme with respect to the canonical log structure, i.e. the log structure associated to $\N\rightarrow R,1\mapsto \pi$.  Let $\mr{LAV}_S$ be the category of log abelian varieties over $S$, and $\mr{AV}_K^{\mr{SSt}}$ the category of abelian varieties with semi-stable reduction over $K$. Taking the generic fiber gives rise to a functor $(-)_K:\mr{LAV}_S\rightarrow \mr{AV}_K^{\mr{SSt}}$.

We have the following beautiful theorem by \cite[13.4]{k-k-n4}.

\begin{thm}\label{thm3.1}
The functor $$(-)_K:\mr{LAV}_S\rightarrow \mr{AV}_K^{\mr{SSt}}$$
is an equivalence of categories.
\end{thm}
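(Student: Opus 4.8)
The plan is to realise both categories as equivalent to one and the same category of \emph{degeneration data} over $S$, and to check that $(-)_K$ corresponds under this identification to passing from such data to its rigid-analytic uniformisation over $K$. Recall that a log abelian variety $A$ over $S$ is assembled from its semi-abelian part $G$, which over our complete base is a genuine semi-abelian scheme over $S$, together with the finitely generated free $\Z$-modules $X$, $Y$ and the admissible pairing $\langle,\rangle\colon X\times Y\to\Gmlb$, via the exact sequence $0\to G\to A\to\mc{Q}/\overline{Y}\to 0$ of (\ref{eq1.7}). Over the generic point the log structure is trivial, so $\Gmlb$ disappears and $A_K$ becomes the honest quotient $G_K/Y$ of the Raynaud extension $G_K$ by the period lattice $Y$; on the other side, Raynaud's uniformisation theory presents every semi-stable $A_K$ in precisely this form. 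Thus the strategy is to compose the equivalence of log abelian varieties with degeneration data over $S$ with the classical equivalence of semi-stable abelian varieties with uniformisation data over $K$, the bridge being that over a complete base generic-fibre data and $S$-data determine one another.

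First I would check that $(-)_K$ is well defined: since $G$ is semi-abelian over $S$, its special fibre is the semi-abelian reduction of $A_K=G_K/Y$, so $A_K$ indeed lies in $\mr{AV}_K^{\mr{SSt}}$. For faithfulness, a morphism $A\to A'$ is determined by the map of semi-abelian parts $G\to G'$ together with the lattice maps $X'\to X$ and $Y\to Y'$, by the analysis underlying Lemma \ref{lem2.1}. The Raynaud extension $G_K$ is recovered from $A_K$ functorially, and because $R$ is complete the extension of $G_K$ to a semi-abelian scheme $G$ over $S$ is unique and functorial; hence each of $G\to G'$, $X'\to X$, $Y\to Y'$ is already determined by $A_K\to A'_K$, and $(-)_K$ is faithful.

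For fullness, a homomorphism $g\colon A_K\to A'_K$ induces a morphism of Raynaud extensions $G_K\to G'_K$ carrying $Y$ into $Y'$, which extends uniquely to $G\to G'$ over $S$ by completeness, together with compatible lattice maps $u\colon X'\to X$, $v\colon Y\to Y'$ satisfying $\langle u(x'),y\rangle=\langle x',v(y)\rangle'$, exactly as in the proof of Proposition \ref{prop2.1}; assembling these via the exact sequences (\ref{eq1.7}) for $A$ and $A'$ produces a morphism $A\to A'$ restricting to $g$. For essential surjectivity, starting from a semi-stable $A_K$ I would take its Raynaud uniformisation to obtain a semi-abelian variety $G_K$ over $K$ and a period lattice $Y\hookrightarrow G_K$; because $R$ is complete the toric part extends to a torus and the abelian part has good reduction, so $G_K$ itself extends canonically to a semi-abelian scheme $G$ over $S$, which also supplies the character lattice $X$ and the monodromy pairing, admissible since $A_K$ is polarisable. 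Feeding $(G,X,Y,\langle,\rangle)$ into (\ref{eq1.7}) gives a log abelian variety $A$ over $S$ with $A_K$ the prescribed abelian variety.

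The hard part will be essential surjectivity, and specifically the passage from the analytic uniformisation data over $K$ to the data required over $S$. Two points carry the weight: extending the Raynaud extension $G_K$ to a semi-abelian scheme $G$ over the complete base $S$, and upgrading the analytic monodromy pairing to the $\Gmlb$-valued \emph{admissible} pairing $\langle,\rangle$ demanded by the definition of a log abelian variety, together with the uniqueness of this upgrade. This is precisely where completeness of $R$ and the full strength of the theory of degeneration over a complete base are indispensable, and where the equivalence ultimately rests.
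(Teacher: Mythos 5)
The first thing to say is that the paper does not prove Theorem \ref{thm3.1} at all: as Remark \ref{rmk3.1} records, the statement is quoted from \cite[13.4]{k-k-n4}, whose proof is deferred by those authors to a sequel of their series, and only the split totally degenerate case is proved in \cite{zha1} via the ``degeneration functor''. So there is no in-paper argument to compare yours against; what can be assessed is only whether your outline would stand on its own.

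Your strategy --- identify $\mr{LAV}_S$ with a category of degeneration data $(G,X,Y,\langle,\rangle)$ over $S$, identify $\mr{AV}_K^{\mr{SSt}}$ with Raynaud uniformisation data over $K$, and match the two using completeness of $R$ --- is exactly the expected route, and is the one taken in \cite{zha1} in the special case. But as written it is a plan, not a proof, and the points you flag at the end are precisely the ones carrying all the content. Concretely: (i) upgrading the valuation-theoretic monodromy data to the $\Gmlb$-valued pairing and proving it is \emph{admissible} and nondegenerate in the sense required by \cite[Def. 4.1]{k-k-n2} is where polarisability of $A_K$ enters, and it must be checked, not asserted; (ii) one must verify that the sheaf assembled from $(G,X,Y,\langle,\rangle)$ via (\ref{eq1.7}) satisfies the axioms of a log abelian variety rather than merely a weak one; (iii) in the fullness step, one must show that a $K$-homomorphism of uniformisation data automatically respects the log-integral structure, i.e.\ that the induced $u$ and $v$ are compatible with the $\Gmlb$-valued pairings and not just with their generic fibres. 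There is also a small imprecision: the Raynaud extension is in the first instance a formal/rigid-analytic object, and the functorial passage between $A_K$ and $(G_K,Y)$ rests on the full strength of Mumford--Faltings--Chai degeneration theory over the complete base, not on a formal consequence of completeness. Until these steps are carried out, the proposal cannot replace the citation of \cite[13.4]{k-k-n4}.
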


\begin{rmk}\label{rmk3.1}
Theorem \ref{thm3.1} is a special case of one of the equivalences of categories stated in \cite[13.4]{k-k-n4}. The authors will give the proof in a sequel of their series of papers on log abelian varieties. A proof for the split totally degenerate case of Theorem \ref{thm3.1} can be found in \cite{zha1}, and the inverse functor to $(-)_K$ is called the degeneration functor there. 
\end{rmk}

As mentioned at the beginning of \cite{kat3}, a finite subgroup scheme of $A_K\in \mr{AV}_K^{\mr{SSt}}$ does not necessarily extend to a finite flat group scheme over $S$. However, we can make the impossible possible if we use log finite flat group schemes instead of finite flat group schemes.

\begin{prop}\label{prop3.1}
Let $f_K:A_K\rightarrow A_K'$ be an isogeny in $\mr{AV}_K^{\mr{SSt}}$. Then $f$ extends to an isogeny $f:A\rightarrow A'$ of log abelian varieties over $S$, where $A$ and $A'$ are the log abelian varieties corresponding to $A_K$ and $A_K'$ respectively under the equivalence of Theorem \ref{thm3.1}.
\end{prop}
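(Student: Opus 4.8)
The plan is to exploit the equivalence of categories from Theorem \ref{thm3.1} together with the characterisation of isogenies in Proposition \ref{prop2.1}. Since $(-)_K\colon \mr{LAV}_S\to\mr{AV}_K^{\mr{SSt}}$ is an equivalence of categories, the isogeny $f_K\colon A_K\to A'_K$ is in particular a homomorphism in $\mr{AV}_K^{\mr{SSt}}$, so it corresponds under the equivalence to a unique homomorphism $f\colon A\to A'$ of log abelian varieties over $S$ with $f_K=(f)_K$. The only thing left to verify is that this $f$ is an isogeny in the sense of Definition \ref{def2.1}; the existence and uniqueness of the extension are then automatic from the equivalence.

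To check that $f$ is an isogeny, I would use the equivalence $(1)\Leftrightarrow(3)$ of Proposition \ref{prop2.1}, which requires $S$ to have connected underlying scheme --- this holds since $S=\Spec R$ with $R$ a discrete valuation ring, hence local. Thus it suffices to show that $\dim A=\dim A'$ and that $\ker f\in(\mr{fin}/S)_{\mr{r}}$. First I would observe that because $f_K$ is an isogeny of abelian varieties, $\dim A_K=\dim A'_K$; and since taking the generic fibre preserves the dimension of a log abelian variety (the generic fibre of $A$ is $A_K$ and they have the same dimension), we conclude $\dim A=\dim A'$. The substantive point is to control $\ker f$.

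The main obstacle will be showing $\ker f\in(\mr{fin}/S)_{\mr{r}}$, i.e. that the kernel is a log finite flat group scheme over $S$. My approach would be to reduce to the known behaviour of $n$-torsion: since $f_K$ is an isogeny of abelian varieties, its kernel $\ker f_K$ is killed by some positive integer $n$, so $\ker f_K\subseteq A_K[n]$ and $f_K$ factors through the multiplication-by-$n$ map. Equivalently, there is an isogeny $g_K\colon A'_K\to A_K$ with $g_K\circ f_K=n_{A_K}$. Transporting this back through the equivalence of Theorem \ref{thm3.1}, I obtain a homomorphism $g\colon A'\to A$ with $g\circ f=n_A$ over $S$. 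Since $n_A$ is an isogeny with kernel $A[n]\in(\mr{fin}/S)_{\mr{r}}$ by Corollary \ref{cor2.1} (or directly \cite[Prop. 18.1. (1)]{k-k-n4}), the kernel of $f$ is contained in $A[n]$, and I would then argue that $\ker f$ is itself in $(\mr{fin}/S)_{\mr{r}}$, for instance by exhibiting it as the kernel of the induced map $A[n]\to A'[n]$ and invoking \cite[Prop. 18.1. (1)]{k-k-n4} as in the proof of $(2)\Rightarrow(1)$ in Proposition \ref{prop2.1}.

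Once $\ker f\in(\mr{fin}/S)_{\mr{r}}$ is established, the proof concludes immediately: together with $\dim A=\dim A'$, condition $(3)$ of Proposition \ref{prop2.1} holds, whence $f$ is an isogeny by the implication $(3)\Rightarrow(1)$. I expect the dimension equality and the existence of the extension to be routine consequences of the equivalence, while the careful verification that the kernel lands in $(\mr{fin}/S)_{\mr{r}}$ --- rather than merely in $(\mr{fin}/S)_{\mr{f}}$ --- is where the real content lies, and this is precisely where the representability results for $n$-torsion \cite[Prop. 18.1. (1)]{k-k-n4} do the essential work.
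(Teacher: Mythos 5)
Your overall skeleton agrees with the paper's: extend both $f_K$ and a complementary isogeny $g_K$ through the equivalence of Theorem \ref{thm3.1} to get $g\circ f=n_A$, observe that $\mr{ker}f$ sits inside $A[n]$, and then feed the conclusion into Proposition \ref{prop2.1}. But there is a genuine gap at the one step you leave to "for instance" reasoning, and it is precisely where the content of the proposition lies. You propose to conclude $\mr{ker}f\in(\mr{fin}/S)_{\mr{r}}$ by exhibiting it as $\mr{ker}(A[n]\to A'[n])$ and invoking \cite[Prop.\ 18.1.(1)]{k-k-n4} "as in the proof of $(2)\Rightarrow(1)$". In that implication, however, the hypothesis $\mr{ker}f\in(\mr{fin}/S)_{\mr{f}}$ is already given, and it is this hypothesis --- not Prop.\ 18.1.(1) --- that supplies finiteness and \emph{flatness} (after a Kummer flat cover); Prop.\ 18.1.(1) only contributes representability of $A[n]$, whence representability of the kernel as a fibre product. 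By Definition \ref{defn2.1}, $(\mr{fin}/S)_{\mr{r}}$ is a subcategory of $(\mr{fin}/S)_{\mr{f}}$, so before you can place $\mr{ker}f$ there you must show it becomes a classical finite \emph{flat} group scheme over some log flat cover. The kernel of a morphism of finite flat group schemes over a discrete valuation ring is a closed subgroup scheme that need not be flat --- this is exactly the classical obstruction quoted in the introduction (a finite subgroup scheme of $A_K$ need not extend to a finite flat group scheme over $R$), so it cannot be waved away.

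The paper closes this gap as follows. The kernel--cokernel sequence for $g\circ f=n_A$, together with the surjectivity of $n_A$ and $n_{A'}$ (Corollary \ref{cor2.1}), yields $\mr{coker}(f)=0$ and a short exact sequence $0\to\mr{ker}(f)\to A[n]\to\mr{ker}(g)\to 0$. Working Kummer-flat-locally one may assume $A[n]$ and $A'[n]$ are classical finite flat group schemes, so $\mr{ker}(f)=\Spec B_1$ and $\mr{ker}(g)=\Spec B_3$ are classical finite group schemes over $S$, a priori with possible torsion in $B_1$, $B_3$ as $R$-modules. Comparing the ranks obtained by base-changing the exact sequence to the closed point and to the generic point of $\Spec R$ forces the torsion parts of $B_1$ and $B_3$ to vanish, i.e.\ $\mr{ker}(f)$ is flat; only then does $\mr{ker}(f)\in(\mr{fin}/S)_{\mr{f}}$, and the paper concludes via $(2)\Rightarrow(1)$ of Proposition \ref{prop2.1} (your route through $(3)\Rightarrow(1)$ would then also work, and your dimension argument is fine, but it is not where the difficulty is). You should supply this flatness argument, or an equivalent one, to complete the proof.
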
 
\begin{proof}
Since $f_K$ is an isogeny, there exists another isogeny $g_K:A_K'\rightarrow A_K$ such that $g_K\circ f_K=n_{A_K}$ and $f_K\circ g_K=n_{A_K'}$ for some positive integer $n$. By the equivalence of categories of Theorem \ref{thm3.1}, the homomorphism $f_K$ (resp. $g_K$) extends to a homomorphism $f:A\rightarrow A'$ (resp. $g:A'\rightarrow A$). And we still have $g\circ f=n_A$ and $f\circ g=n_{A'}$. The kernel-cokernel exact sequence gives rise to two exact sequences
$$ \mr{coker}(g) \rightarrow\mr{coker}(n_{A'}) \rightarrow \mr{coker}(f)\rightarrow 0 $$
and
$$0\rightarrow \mr{ker}(f) \rightarrow A[n] \rightarrow \mr{ker}(g)\rightarrow\mr{coker}(f) .$$
The homomorphisms $n_{A'}$ and $n_A$ are both isogenies by Corollary \ref{cor2.1}, hence we have $\mr{coker}(n_{A'})=\mr{coker}(n_{A})=0$ and $A[n],A'[n]\in (\mr{fin}/S)_{\mr{r}}$. Then we have $\mr{coker}(f)=0$ and a short exact sequence
\begin{equation}\label{eq3.1}
0\rightarrow \mr{ker}(f) \rightarrow A[n] \rightarrow \mr{ker}(g)\rightarrow 0.
\end{equation}
In order to show that $f$ is an isogeny, we are reduced to show $\mr{ker}(f)\in (\mr{fin}/S)_{\mr{f}}$ by Proposition \ref{prop2.1}. This is a local problem for the Kummer flat topology, hence we may assume $A[n],A'[n]\in (\mr{fin}/S)_{\mr{c}}$ without loss of generality. Since both $A[n]$ and $A'[n]$ are finite over $S$, the homomorphism $A[n]\xrightarrow{f} A'[n]$ is automatically finite. Hence $\mr{ker}(f)$ is a classical finite group scheme over $S$. We are left with showing that $\mr{ker}(f)$ is flat over $S$.

Suppose that $\mr{ker}(f)=\mathop{\mr{Spec}}B_1$, $A[n]=\mathop{\mr{Spec}}B_2$ and $\mr{ker}(g)=\mathop{\mr{Spec}}B_3$. It suffices to show that $B_1$ is a free $R$-module. Note that $\mr{ker}(g)$ is finite over $S$ by the same argument as for $\mr{ker}(f)$. Let $r_1$ (resp. $r_3$) be the $R$-rank of the free part of $B_1$ (resp. $B_3$), and $t_1$ (resp. $t_3$) the length of the torsion part of $B_1$ (resp. $B_3$). The $R$-module $B_2$ is free, let $r_2$ be the $R$-rank of $B_2$. The base change of the short exact sequence (\ref{eq3.1}) to the closed point of $S$ remains exact, hence we get $r_2=(r_1+t_1)+(r_3+t_3)$. The base change of the short exact sequence (\ref{eq3.1}) to the generic point of $S$ remains exact, hence we get $r_2= r_1+r_3$. It follows that $t_1=t_3=0$ and $B_1$ is a free module of rank $r_{1}$ over $R$. This finishes the proof.
\end{proof}

\begin{thm}\label{thm3.2}
Let $A_K$ be a semi-stable abelian variety over $K$, and $F_K$ a finite subgroup scheme of $A_K$. Let $A$ be the log abelian variety over $S$ extending $A_K$ guaranteed by Theorem \ref{thm3.1}. Then $F_K$ extends to a log finite flat group scheme $F$ over $S$, which is a subgroup sheaf of $A$. Moreover the quotient $A/F$ is a log abelian variety over $S$.
\end{thm}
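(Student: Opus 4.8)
The plan is to realise $F_K$ as the kernel of an isogeny over $K$, to extend that isogeny to $S$ by means of Proposition \ref{prop3.1}, and then to recover $F$ as its kernel. First I would form the quotient $A'_K := A_K/F_K$ together with the quotient map $f_K : A_K \to A'_K$, which is an isogeny of abelian varieties over $K$ with $\mr{ker}(f_K) = F_K$. The essential input from the theory of abelian varieties is that $A'_K$ again has semi-stable reduction. This holds because semi-stable (semi-abelian) reduction is an isogeny invariant: for $\ell$ coprime to the residue characteristic it is detected by the unipotence of the inertia action on the $\ell$-adic Tate module, and isogenous abelian varieties have isomorphic rational Tate modules as Galois representations. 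Hence $A'_K \in \mr{AV}_K^{\mr{SSt}}$, and it falls within the scope of Theorem \ref{thm3.1}.

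Next, by Theorem \ref{thm3.1} the variety $A'_K$ corresponds to a log abelian variety $A'$ over $S$, and by Proposition \ref{prop3.1} the isogeny $f_K$ extends to an isogeny $f : A \to A'$ of log abelian varieties over $S$ with $(f)_K = f_K$. I would then set $F := \mr{ker}(f)$. By Definition \ref{def2.1}, the fact that $f$ is an isogeny means precisely that $F \in (\mr{fin}/S)_{\mr{r}}$, that is, $F$ is a log finite flat group scheme over $S$; and it is a subgroup sheaf of $A$ by construction.

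To confirm that $F$ extends $F_K$, I would use that restriction to the generic fibre is exact (it is pullback along the open immersion $\Spec K \to S$, over which the log structure is trivial and the Kummer flat topology reduces to the fppf topology), so that it commutes with the formation of kernels. This gives $(F)_K = (\mr{ker} f)_K = \mr{ker}((f)_K) = \mr{ker}(f_K) = F_K$, as required. Finally, since $f$ is an isogeny it is surjective with kernel $F$, so the first isomorphism theorem for sheaves of abelian groups on $S_{\mr{kfl}}$ yields an isomorphism $A/F \xrightarrow{\sim} A'$; in particular $A/F$ is a log abelian variety over $S$.

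The main obstacle is the very first step, namely verifying that the quotient $A_K/F_K$ remains semi-stable, since it is exactly this fact that makes both the equivalence of Theorem \ref{thm3.1} and the isogeny-extension of Proposition \ref{prop3.1} applicable to $A'_K$. Once semi-stability of the quotient is secured, the remainder is formal: the extension of the isogeny, the identification of $F$ as its kernel, the compatibility with the generic fibre, and the description of $A/F$ all follow directly from the results of the previous two sections.
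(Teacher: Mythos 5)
Your proposal is correct and follows essentially the same route as the paper's proof: form $A_K':=A_K/F_K$, extend the quotient isogeny to $f:A\rightarrow A'$ via Proposition \ref{prop3.1}, and take $F=\mr{ker}(f)$, identifying $A/F$ with $A'$. The only difference is that you spell out two points the paper leaves implicit (the semi-stability of $A_K'$, needed to stay within the scope of Theorem \ref{thm3.1}, and the compatibility of kernels with passage to the generic fibre), which is a harmless elaboration rather than a different argument.
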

\begin{proof}
Let $A_K'$ be the abelian variety $A_K/F_K$, then we get an isogeny $f_K:A_K\rightarrow A_K'$ of abelian varieties. By Proposition \ref{prop3.1}, $f_K$ extends to an isogeny $f:A\rightarrow A'$ of log abelian varieties over $S$. Then $F:=\mr{ker}(f)$ is a log finite flat group scheme over $S$ whose generic fiber is $F_K$. Apparently we have that the quotient $A/F$ is the log abelian variety $A'$.
\end{proof}

The following corollary is well-known. Using the theory of log abelian variety, we can give a new proof to this result.

\begin{cor}\label{cor3.1}
Let the notation be as in Theorem \ref{thm3.2}. Assume that the order of $F_K$ is invertible in $R$. Then $F_K$ is tamely ramified.
\end{cor}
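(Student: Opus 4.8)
The plan is to reduce the tameness of $F_K$ to the fact that the Kummer \'etale fundamental group of $S$ is the tame fundamental group of its generic point, after first promoting the log finite flat group scheme $F$ of Theorem \ref{thm3.2} to a Kummer \'etale object. First I would record the meaning of the conclusion. Since the order $n$ of $F_K$ is invertible in $R$, hence in the field $K$, the finite group scheme $F_K$ is \'etale over $K$; it therefore corresponds to a finite $\mr{Gal}(K^{\mr{sep}}/K)$-module $M=F_K(K^{\mr{sep}})$, and ``$F_K$ is tamely ramified'' means exactly that the wild inertia $P\subset\mr{Gal}(K^{\mr{sep}}/K^{\mr{ur}})$ acts trivially on $M$. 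This is genuinely a statement to be proved and not a formality: although $|M|$ is prime to the residue characteristic $p$, the group $\mr{Aut}(M)$ may well have order divisible by $p$, so the log structure is what must force tameness.

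Next I would show that $F$ is a Kummer \'etale group scheme over $S$. By Definition \ref{defn2.1}, $F\in(\mr{fin}/S)_{\mr{r}}$ admits a log flat (Kummer flat) cover $U\to S$ over which $F_U$ is a classical finite flat group scheme; its order is still $n$, and $n$ remains invertible in $\mathcal{O}_U$ because a Kummer flat cover does not alter the residue characteristics. A classical finite flat group scheme of order invertible in the base is finite \'etale, so $F_U$ is finite \'etale over $U$. The log analogue of the comparison of the flat and \'etale topologies for group objects of order prime to the residue characteristic then identifies $F$ with a Kummer \'etale group scheme over $S$, i.e. a locally constant sheaf of finite abelian groups for the Kummer \'etale topology, classified by a finite module under the Kummer \'etale fundamental group $\pi_1^{\mr{k\acute{e}t}}(S)$.

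Finally I would invoke the comparison between $\pi_1^{\mr{k\acute{e}t}}(S)$ and the tame fundamental group of the generic point. Restriction along the open immersion $\mr{Spec}\,K\hookrightarrow S$, where $\mr{Spec}\,K$ is the log-trivial locus and carries the trivial log structure so that Kummer \'etale coincides there with ordinary \'etale, yields a canonical homomorphism $\mr{Gal}(K^{\mr{sep}}/K)=\pi_1^{\mr{\acute{e}t}}(\mr{Spec}\,K)\to\pi_1^{\mr{k\acute{e}t}}(S)$. The inertia part of $\pi_1^{\mr{k\acute{e}t}}(S)$ is the prime-to-$p$ Tate module $\widehat{\Z}^{(p')}(1)$, with no wild part, because the only ramified Kummer \'etale covers of $S$ are the tame Kummer covers $\mr{Spec}\,R[t]/(t^e-\pi)\to S$ with $e$ prime to $p$ (classical \'etale covers of $S$ accounting for the unramified part), and every tame extension of $K$ arises this way. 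Consequently the above homomorphism kills $P$. Since $M=F_K(K^{\mr{sep}})$ is obtained by pulling back the $\pi_1^{\mr{k\acute{e}t}}(S)$-module attached to $F$ along it, the wild inertia $P$ acts trivially on $M$, and $F_K$ is tamely ramified.

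I expect the main obstacle to be the middle step together with the precise form of the comparison isomorphism: namely, rigorously upgrading $F$ from a Kummer \emph{flat} object of order prime to $p$ to an honest Kummer \emph{\'etale} group scheme, and stating the Kummer \'etale versus tame fundamental group identification (and the fact that $F$ restricts to $F_K$ under it) in a form valid for an arbitrary residue field $k$ rather than only a separably closed one.
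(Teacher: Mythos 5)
Your proposal is correct and follows essentially the same route as the paper: extend $F_K$ to $F\in(\mr{fin}/S)_{\mr{r}}$ via Theorem \ref{thm3.2}, use the invertibility of the order to see that $F$ is Kummer flat locally constant (the paper does this via the connected--\'etale sequence, killing $F^{\circ}$, which is the same content as your observation that $F_U$ is finite \'etale over a Kummer flat cover $U$), upgrade to Kummer \emph{\'etale} locally constant by the flat-versus-\'etale comparison for prime-to-$p$ objects (the paper cites this as \cite[Thm. A.2]{zha3}, exactly the step you flagged as the main obstacle), and conclude because the finite Kummer \'etale covers of $S$ are obtained from classical \'etale covers and tame Kummer extensions of $R$. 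The only difference is cosmetic: the paper trivializes $F$ over an explicit tame cover $S'=\Spec R'$ rather than phrasing the last step through $\pi_1^{\mr{k\acute{e}t}}(S)$.
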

\begin{proof}
By Theorem \ref{thm3.2}, $F_K$ extends to $F\in (\mr{fin}/S)_{\mr{r}}$. Let
$$0\rightarrow F^{\circ}\rightarrow F\rightarrow F^{\mr{\acute{e}t}}\rightarrow 0$$
be the connected-\'etale sequence of $F$, see \cite[Lem. A.1]{zha3}. Since the order of $F_K$ is invertible in $R$, the connected part $F^{\circ}$ is trivial. Hence $F=F^{\mr{\acute{e}t}}$ is locally constant for the Kummer flat topology. By \cite[Thm. A.2]{zha3}, $F$ is already locally constant for the Kummer \'etale topology. Hence there exists a finite Kummer \'etale cover $S':=\mathop{\mr{Spec}}R'$ of $S$ such that $F_{S'}:=F\times_SS'$ is constant, where $R'$ is a tamely ramified extension of $R$ and $S'$ is endowed with the canonical log structure. It follows that $F_K$ is tamely ramified.
\end{proof}

\appendix
\section{Kummer flat covers are open}
The theory of Kummer flat topology is developed by Kazuya Kato in his preprint \cite{kat2}. We refer to the published papers \cite{niz1} and \cite{i-n-t1} for Kummer flat topology. Unfortunately to the author's knowledge, the result of Lemma \ref{lemA.1}, which we have used in the proof of Lemma \ref{lem2}, is only available in the preprint \cite{kat2}. For this reason, we present a proof of Lemma \ref{lemA.1} by modifying the proof of \cite[Cor. 3.7]{ill1}, which concerns Kummer \'etale covers.

\begin{lem}\label{lemA.1}
Let $f:X\rightarrow Y$ be a map of fs log schemes. Assume that $f$ is Kummer log flat, see \cite[Def. 2.13]{niz1}, and the underlying map of schemes of $f$ is locally of finite presentation. Then the underlying map of schemes of $f$ is open.
\end{lem}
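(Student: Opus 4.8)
The plan is to follow the architecture of the proof of \cite[Cor. 3.7]{ill1}, replacing the one ingredient that is special to the Kummer \'etale case — finiteness and flatness of a standard cover, which there rests on the index being invertible on the base — by a purely topological argument that survives without inverting the index.

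First I would reduce to a local model. Openness of the underlying map is local on the source, and it may be checked after replacing $X$ and $Y$ by the members of strict \'etale covers, since such covers are open and surjective on underlying schemes. After such a reduction, the local structure theorem for Kummer log flat morphisms whose underlying map is locally of finite presentation (\cite{kat2}, \cite[\S 2.1]{niz1}) supplies a chart: a Kummer injection $P\hookrightarrow Q$ of fs monoids, a strict morphism $Y\to\Spec\Z[P]$, and a factorisation
$$X\xrightarrow{\ g_0\ }Y':=Y\times_{\Spec\Z[P]}\Spec\Z[Q]\longrightarrow Y$$
of $f$ in which $g_0$ is strict and classically flat and locally of finite presentation. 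The underlying map of $g_0$ is then open because a flat morphism locally of finite presentation is open. Hence it suffices to treat the second factor $Y'\to Y$. Since $Y'\to Y$ is the base change, along $Y\to\Spec\Z[P]$, of the \emph{standard Kummer cover} $g\colon\Spec\Z[Q]\to\Spec\Z[P]$ induced by $b\colon P\hookrightarrow Q$, it is enough to prove that $g$ is \emph{universally} open.

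To prove this I would use the two power maps attached to the data. As $Q$ is finitely generated and $P\hookrightarrow Q$ is Kummer, there is an integer $n\geq 1$ with $nQ\subseteq P$. Let $\psi\colon\Spec\Z[P]\to\Spec\Z[P]$ be induced by $[n]\colon P\to P$, and let $g'\colon\Spec\Z[P]\to\Spec\Z[Q]$ be induced by $c\colon Q\to P,\ q\mapsto nq$ (well defined since $nQ\subseteq P$). The identity $c\circ b=[n]_P$ gives $g\circ g'=\psi$. The three key points are: (i) $\psi$ is finite and \emph{flat}, because $P$ is saturated, whence $\Z[P]$ is a free module over the subring $\Z[nP]$ — so $\psi$ is universally open; (ii) $g'$ is finite and surjective, because $\Z[Q]\to\Z[P],\ T_q\mapsto T_{nq}$, is an isomorphism onto $\Z[nQ]$ over which $\Z[P]$ is integral (as $T_p^{\,n}=T_{np}\in\Z[nQ]$), so that $g'$ is injective with $\Z[P]$ finite over it, hence surjective on spectra and universally surjective; and (iii) the elementary observation that if $g'$ is surjective and $g\circ g'$ is open, then $g$ is open, because for an open $U$ in the source of $g$ one has $g(U)=(g\circ g')\bigl((g')^{-1}(U)\bigr)$. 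Running (iii) after an arbitrary base change — legitimate since flatness of $\psi$ and surjectivity of $g'$ are preserved by base change, and the base change of the triangle $\Spec\Z[P]\xrightarrow{g'}\Spec\Z[Q]\xrightarrow{g}\Spec\Z[P]$ again has composite equal to the base change of $\psi$ — yields that $g$ is universally open. Together with the reductions above this gives the lemma.

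The main obstacle is step (i), the replacement for the \'etale-case input. In the Kummer \'etale situation the standard cover is itself finite and flat because $[Q^{\mr{gp}}:P^{\mr{gp}}]$ is invertible on the base, and flatness gives openness at once; here the index need not be invertible, and indeed $g$ \emph{itself} is typically not flat (already $\Spec\Z[\N^2]\to\Spec\Z[\langle(2,0),(1,1),(0,2)\rangle]$ fails to be flat at the origin), so the \'etale argument cannot be transported verbatim. The remedy is to transfer the flatness to the power map $\psi$, and the point to justify is precisely that $\Z[P]$ is free over $\Z[nP]$ for saturated $P$; this is where saturation of the fs monoids is used essentially, via the grading of $\Z[P]$ by the finite group $P^{\mr{gp}}/nP^{\mr{gp}}$ together with the identity $P\cap nP^{\mr{gp}}=nP$. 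A secondary technical point, where the hypotheses that $f$ is Kummer log flat and underlying locally of finite presentation are consumed, is to pin down the exact form of the local structure theorem so that the factorisation through $Y'$ with $g_0$ strict and flat is available.
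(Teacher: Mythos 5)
Your reduction to the standard Kummer cover and the factorisation $g\circ g'=\psi$ through the multiplication\nobreakdash-by\nobreakdash-$n$ map are sound, but step (i) --- the assertion that $\Z[P]$ is a free (or even flat) module over $\Z[nP]$ because $P$ is saturated --- is false, and it is the load-bearing step. The identity $P\cap nP^{\mr{gp}}=nP$ only identifies the degree-zero piece of the $P^{\mr{gp}}/nP^{\mr{gp}}$-grading with $\Z[nP]$; it says nothing about the other graded pieces $\Z[P_c]$, $P_c=\{p\in P:\ p\equiv c \bmod nP^{\mr{gp}}\}$, which are rank-one reflexive $\Z[nP]$-modules but in general not invertible. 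Your own example already defeats the claim: take $P=\langle(2,0),(1,1),(0,2)\rangle$ and $n=2$. For the class $c$ of $(1,3)$ one finds $P_c=\{(i,j)\in\N^2:\ i,j\ \text{odd},\ i+j\equiv 0\ (\mathrm{mod}\ 4)\}$, whose minimal elements over $2P$ are $(1,3)$ and $(3,1)$, subject to the relation $T_{(4,0)}T_{(1,3)}=T_{(2,2)}T_{(3,1)}$; so $\Z[P_c]$ needs two generators at the vertex while having generic rank one, hence is not free, hence (being finitely generated over the local ring there) not flat. Consequently $\psi$ is finite and surjective but \emph{not} flat, its openness does not follow from anything you have established, and the reduction in (iii) yields nothing. (Freeness of $\Z[P]$ over $\Z[nP]$ does hold when $P$ is free, but charts of Kummer log flat morphisms cannot be taken with $P$ free.)

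To get openness of $\psi$ --- or directly of $\beta=g\times_{\Spec\Z[P]}Y$ --- one needs a different mechanism, and this is what the paper's proof supplies: $\beta$ is finite and surjective, hence closed and a topological quotient map, so it suffices to show that $\beta^{-1}(\beta(U))$ is open for $U$ open; this saturation is identified, via Nakayama's four-point lemma and the isomorphism $G\times_{\Spec\Z}X'\cong X'\times_YX'$ coming from the amalgamated-sum computation $(P\oplus_QP)^{\mr{fs}}\cong P^{\mr{gp}}/Q^{\mr{gp}}\oplus P$, with the image of $G\times_{\Spec\Z}U$ under the action of the diagonalizable group scheme $G=\Spec\Z[P^{\mr{gp}}/Q^{\mr{gp}}]$; the action map is isomorphic to the projection, hence faithfully flat and open. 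If you wish to keep the power-map factorisation you would still have to run exactly this group-action argument for $\psi$ (which is itself a standard Kummer cover for $nP\hookrightarrow P$), at which point you may as well apply it to $\beta$ directly. The remaining parts of your proposal (the chart reduction, the finiteness and surjectivity of $g'$, and the formal step (iii)) are fine.
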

\begin{proof}
The property of being open for a map of schemes is local for the classical flat topology. Hence without loss of generality, we may assume that the Kummer log flat morphism $f$ admits a global chart $(P\rightarrow M_X,Q\rightarrow M_Y,Q\xrightarrow{u} P)$, such that:
\begin{enumerate}[(1)]
\item $u$ is a homomorphism of fs monoids of Kummer type, with $u^{\mr{gp}}:Q^{\mr{gp}}\rightarrow P^{\mr{gp}}$ injective;
\item the morphism $\alpha$ in the following commutative diagram
$$\xymatrix{
X\ar@/_1pc/[rdd]_f\ar[rd]^{\alpha}\ar@/^1pc/[rrd]  \\
&Y\times_{\Spec\Z[Q]}\Spec\Z[P]\ar[r]\ar[d]^{\beta}  &\Spec\Z[P]\ar[d]^{\gamma}  \\
&Y\ar[r] &\Spec\Z[Q]
}$$
induced by the chart, is classically flat.
\end{enumerate}
Since $\gamma$ is clearly of finite presentation, $\beta$ is of finite presentation. Since $f$ is locally of finite presentation, we have that $\alpha$ is locally of finite presentation by \cite[\href{http://stacks.math.columbia.edu/tag/02FV}{Tag 02FV}]{stacks-project}. It follows that $\alpha$ is an open map of schemes by \cite[\href{http://stacks.math.columbia.edu/tag/01UA}{Tag 01UA}]{stacks-project}. To show that $f$ is open, it suffices to show that $\beta$ is open. 

The morphism $\gamma$ is clearly a finite map, hence so is $\beta$. It follows that $\beta$ is a closed map. Since $\Z[Q]\rightarrow\Z[P]$ is both injective and finite, the map $\gamma$ is surjective. It follows that $\beta$ is surjective. Hence the closed surjective map $\beta$ realizes the topology of $Y$ as the quotient topology of the topology of $X':=Y\times_{\Spec\Z[Q]}\Spec\Z[P]$. Let $U$ be an open subset of $X'$. To show that $\beta(U)$ is open, it suffices to show that $\beta^{-1}(\beta(U))$ is open.

Let $G$ be the group scheme $\Spec\Z[P^{\mr{gp}}/Q^{\mr{gp}}]$ over $\Spec\Z$ endowed with the trivial log structure. The group scheme $G$ acts on $X'$ over $Y$ by
\begin{align*}
\mc{O}_{X'}=\mc{O}_Y\otimes_{\Z[Q]}\Z[P]&\rightarrow\mc{O}_{G\times_{\Spec\Z}X'}=\mc{O}_Y\otimes_{\Z[Q]}\Z[ P^{\mr{gp}}/Q^{\mr{gp}}\oplus P]  \\
1\otimes a & \mapsto 1\otimes(\text{$a$ mod $Q^{\mr{gp}}$},a) \text{, for $a\in P$}.
\end{align*}
Let $(P\oplus_QP)^{\mr{fs}}$ denote the amalgamated sum of $P\xleftarrow{u}Q\xrightarrow{u}P$ in the category of fs monoids, then we have 
\begin{equation}\label{eqA.1}
(P\oplus_QP)^{\mr{fs}}\xrightarrow{\cong} P^{\mr{gp}}/Q^{\mr{gp}}\oplus P, (a,b)\mapsto (\bar{b},a+b)
\end{equation}
by \cite[Lem. 3.3]{ill1}. The isomorphism (\ref{eqA.1}) induces an isomorphism 
\begin{equation}\label{eqA.2}
G\times_{\Spec\Z}X'\xrightarrow{\cong}X'\times_YX',
\end{equation}
which is nothing but the map $(g,x)\mapsto (x,gx)$. By Nakayama's fourth point Lemma, see \cite[Lem. 2.2]{ill1}, we have $\beta^{-1}(\beta(U))=\mr{pr}_2(\mr{pr}_1^{-1}(U))$, where $\mr{pr}_1$ (resp. $\mr{pr}_2$) denotes the first (resp. second) projection of $X'\times_YX'$ to $X'$. Taking the isomorphism \ref{eqA.2} into account, $\beta^{-1}(\beta(U))$ is further identified with the image of $G\times_{\Spec\Z}U$ under the action morphism $G\times_{\Spec\Z}X'\rightarrow X'$. Since the structure morphism of the group scheme $G$ over $\Spec\Z$ is a classical flat cover, the projection to the second factor $G\times_{\Spec\Z}X'\rightarrow X'$ is also a classical flat cover. Since 
$$G\times_{\Spec\Z}X'\xrightarrow{\cong}G\times_{\Spec\Z}X',(g,x)\mapsto(g,gx),$$
the action morphism $G\times_{\Spec\Z}X'\rightarrow X'$ is also a classical flat cover. In particular, the action morphism $G\times_{\Spec\Z}X'\rightarrow X'$ is open, hence $\beta^{-1}(\beta(U))$ is open. This finishes the proof.
\end{proof}

\section*{Acknowledgement}
I thank Professor Chikara Nakayama for very helpful correspondences, as well as for his constant encouragements. I thank Professor Ulrich G\"ortz for very helpful discussion. I am grateful to the anonymous referee for his comments and suggestions, in particular for pointing out a gap in the proof of Lemma \ref{lem2}. This work has been supported by SFB/TR 45 ``Periods, moduli spaces and arithmetic of algebraic varieties''.

\bibliographystyle{alpha}
\bibliography{bib}

\begin{thebibliography}{KKN08b}

\bibitem[Ill02]{ill1}
Luc Illusie.
\newblock An overview of the work of {K}. {F}ujiwara, {K}. {K}ato, and {C}.
  {N}akayama on logarithmic \'etale cohomology.
\newblock {\em Ast\'erisque}, (279):271--322, 2002.
\newblock Cohomologies $p$-adiques et applications arithm{\'e}tiques, II.

\bibitem[INT13]{i-n-t1}
Luc Illusie, Chikara Nakayama, and Takeshi Tsuji.
\newblock On log flat descent.
\newblock {\em Proc. Japan Acad. Ser. A Math. Sci.}, 89(1):1--5, 2013.

\bibitem[Kat89]{kat3}
Kazuya Kato.
\newblock Logarithmic degeneration and {D}ieudonne theory, preprint 1989.

\bibitem[Kat91]{kat2}
Kazuya Kato.
\newblock Logarithmic structures of {F}ontaine-{I}llusie {II}: {L}ogarithmic
  flat topology, (incomplete) preprint 1991.

\bibitem[KKN08a]{k-k-n2}
Takeshi Kajiwara, Kazuya Kato, and Chikara Nakayama.
\newblock Logarithmic abelian varieties.
\newblock {\em Nagoya Math. J.}, 189:63--138, 2008.

\bibitem[KKN08b]{k-k-n1}
Takeshi Kajiwara, Kazuya Kato, and Chikara Nakayama.
\newblock Logarithmic abelian varieties. {I}. {C}omplex analytic theory.
\newblock {\em J. Math. Sci. Univ. Tokyo}, 15(1):69--193, 2008.

\bibitem[KKN13]{k-k-n3}
Takeshi Kajiwara, Kazuya Kato, and Chikara Nakayama.
\newblock Logarithmic abelian varieties, {III}: logarithmic elliptic curves and
  modular curves.
\newblock {\em Nagoya Math. J.}, 210:59--81, 2013.

\bibitem[KKN15]{k-k-n4}
Takeshi Kajiwara, Kazuya Kato, and Chikara Nakayama.
\newblock Logarithmic abelian varieties, {P}art {IV}: {P}roper models.
\newblock {\em Nagoya Math. J.}, 219:9--63, 2015.

\bibitem[Niz08]{niz1}
Wies{\l}awa Nizio{\l}.
\newblock {$K$}-theory of log-schemes. {I}.
\newblock {\em Doc. Math.}, 13:505--551, 2008.

\bibitem[{Sta}17]{stacks-project}
The {Stacks Project Authors}.
\newblock {Stacks Project}.
\newblock \url{http://stacks.math.columbia.edu}, 2017.

\bibitem[Zha]{zha1}
Heer Zhao.
\newblock Degenerating abelian varieties via log abelian varieties (submitted).

\bibitem[Zha17]{zha3}
Heer Zhao.
\newblock Log abelian varieties over a log point.
\newblock {\em Doc. Math.}, 22:505--550, 2017.

\end{thebibliography}

\end{document}